\theoremstyle{plain}
\newtheorem{theorem}{Theorem}
\theoremstyle{definition}
\newcommand{\Z}{{\mathbb Z}}
\def\li{\text{\rm Li}}
\def\lif{\text{\rm Lif}}
\title{A $(p,q)$-Analogue of  Poly-Euler Polynomials and some related polynomials}
\author{Takao Komatsu}
\address{\noindent School of Mathematics and Statistics, Wuhan University, Wuhan 430072,  CHINA}
\email{komatsu@whu.edu.cn}
\author{Jos\'e L. Ram\'{\i}rez}
\address{\noindent Departamento de Matem\'aticas, Universidad Sergio Arboleda, Bogot\'a,  COLOMBIA}
\email{josel.ramirez@ima.usergioarboleda.edu.co}
\author{V\'ictor F. Sirvent}
\address{\noindent Departamento de Matem\'aticas, Universidad Sim\'{o}n Bol\'{i}var, Apartado 89000, Caracas 1086-A, VENEZUELA.}
\email{vsirvent@usb.ve}
\date{\today}
\subjclass[2010]{Primary 11B83; Secondary 11B68, 11B73, 05A19, 	05A15.}
\keywords{Poly-Bernoulli numbers, multi-poly-Bernoulli numbers, multiple polylogarithm function, generating function, combinatorial identities.}
\begin{document}
\begin{abstract}
In the present article, we introduce a $(p,q)$-analogue of the poly-Euler polynomials and numbers by using the $(p,q)$-polylogarithm function. These new sequences are generalizations of the poly-Euler numbers and polynomials. We give several combinatorial identities  and properties of these new polynomials. Moreover, we show some relations with the $(p,q)$-poly-Bernoulli polynomials and  $(p,q)$-poly-Cauchy polynomials.
The $(p,q)$-analogues generalize the well-known concept of the $q$-analogue.
\end{abstract}

\maketitle

\section{Introduction}

The Euler numbers are defined  by the  generating function
\begin{align*}
\frac{2}{e^t+e^{-t}}=\sum_{n=0}^{\infty}E_n\frac{t^n}{n!}.
\end{align*}
The sequence  $(E_n)_{n}$ counts the numbers of alternating $n$-permutations.  A $n$-permutation $\sigma$ is alternating if the $n-1$ differences $\sigma(i+1)-\sigma(i)$ for $i=1, 2, \dots, n-1$ have alternating signs. For example, (1324) and $(3241)$ are alternating permutations (cf. \cite{Comtet}).

The  Euler polynomials are given by the  generating function
\begin{align} \label{peuler}
\frac{2e^{xt}}{e^t+1}=\sum_{n=0}^{\infty}E_n(x)\frac{t^n}{n!}.
\end{align}
Note that $E_n=2^nE_n(1/2)$.

Many kinds of generalizations of these numbers and polynomials have been presented in the literature ~(see, e.g., \cite{Sri}). In particular, we are interested in the poly-Euler numbers and polynomials (cf. ~\cite{Ham, JCK, Jol, Ohno}). \\

The poly-Euler polynomials $E_n^{(k)}(x)$ are defined by the following generating function
\begin{align*}
\frac{2\li_k(1-e^{-t})}{1+e^{t}}e^{xt}=\sum_{n=0}^{\infty}E_n^{(k)}(x)\frac{t^n}{n!}, \quad (k \in \Z),
\end{align*}
where
\begin{align}\label{polyf}
\li_k(t)=\sum_{n=1}^\infty\frac{t^n}{n^k}
\end{align}
is the $k$-th polylogarithm function. Note that if $k=1$, then $\li_1(t)=-\log(1-t)$, therefore
 $E_n^{(1)}(x)=E_{n-1}(x)$ for $n\geq 1$.
\smallskip

It is also possible to define  the poly-Bernoulli and poly-Cauchy numbers and polynomials from the $k$-th polylogarithm function.  In particular, the poly-Bernoulli numbers  $B_n^{(k)}$ were introduced by Kaneko  \cite{Kaneko} by using the following generating function

\begin{align}
\frac{\li_k(1-e^{-t})}{1-e^{-t}}=\sum_{n=0}^{\infty}B_n^{(k)}\frac{t^n}{n!}, \quad (k\in \Z)
\end{align}
If $k=1$ we get $B_n^{(1)}=(-1)^nB_n$ for $n\geq 0$, where $B_n$ are the Bernoulli numbers.  Remember that the Bernoulli numbers $B_n$ are defined by the  generating function
\begin{align*}
\frac{t}{e^t-1}=\sum_{n=0}^{\infty}B_n\frac{t^n}{n!}.
\end{align*}

The poly-Bernoulli numbers and polynomials have been studied in several papers; among other references, see  \cite{Bayad2, Brew,  Kom1, Cenk, Kom4, Kom3}.

The poly-Cauchy numbers of the first kind $c_n^{(k)}$ were introduced  by the first author  in \cite{Komc}. They are defined as follows
\begin{align}
c_{n}^{(k)}=\underbrace{\int_{0}^1 \cdots \int_{0}^1}_{k}(t_1\cdots t_k)_n \,dt_1\cdots dt_k
\end{align}
where $(x)_n=x(x-1)\cdots (x-n+1) (n\geq 1)$ with $(x)_0=1$.
Moreover, its exponential generating function is
\begin{align}
\lif_k(\ln(1+t))=\sum_{n=0}^{\infty}c_n^{(k)}\frac{t^n}{n!}, \quad (k\in \Z)
\end{align}
where
$$\lif_k(t)=\sum_{n=0}^\infty\frac{t^n}{n!(n+1)^k}$$
is the $k$-th polylogarithm factorial function. For more properties about these numbers see for example \cite{Cenk, Kom5, Kom4, Kom3, Kom2, Kom6}.  If $k=1$, we recover the Cauchy numbers $c_n^{(1)}=c_n$.  The Cauchy numbers $c_n$ were introduced in \cite{Comtet}  by the  generating function
\begin{align*}
\frac{t}{\ln(1+t)}=\sum_{n=0}^{\infty}c_n\frac{t^n}{n!}.
\end{align*}

A generalization of the above sequences was done recently in \cite{Kom4}, using the $k$-th $q$-polylogarithm function   and the Jackson's integral.  In particular, the $q$-poly-Bernoulli numbers are defined by
\begin{align}
\frac{\li_{k,q}(1-e^{-t})}{1-e^{-t}}=\sum_{n=0}^{\infty}B_{n,q}^{(k)}\frac{t^n}{n!}, \quad (k\in \Z,  n\geq 0, 0 \leq q <1),
\end{align}
where
$$\li_{k,q}(t)=\sum_{n=1}^\infty\frac{t^n}{[n]_q^k}$$
is the $k$-th $q$-polylogarithm function (cf. \cite{Mansour}), and $[n]_q=\frac{1-q^n}{1-q}$ is the $q$-integer (cf. \cite{Sri}).
Note that  $\lim_{q\to 1}[x]_q=x, \lim_{q\to 1}B_{n,q}^{(k)}=B_n^{(k)}$ and $ \lim_{q\to 1}\li_{k,q}(x)=\li_k(x)$. \\

The $q$-poly-Cauchy numbers of the first kind $c_{n,q}^{(k)}$ are defined by using the Jackson's $q	$-integral (cf. \cite{Andrews})
\begin{align}
c_{n,q}^{(k)}=\underbrace{\int_{0}^1 \cdots \int_{0}^1}_{k}(t_1\cdots t_k)_nd_{q}t_1\cdots d_{q}t_k
\end{align}
where
$$\int_0^xf(t)d_qt=(1-q)x\sum_{n=0}^\infty f(q^nx)q^n.$$
Moreover, its exponential generating function is
$$\lif_{k,q}(\ln(1+t))=\sum_{n=0}^{\infty}c_{n,q}^{(k)}\frac{t^n}{n!}, \quad (k\in \Z)
$$
where
\begin{align} \label{genqcauhcy}
\lif_{k,q}(t)=\sum_{n=0}^\infty\frac{t^n}{n![n+1]_q^k}
\end{align}
is the $k$-th $q$-polylogarithm factorial function (cf. \cite{Kom4, Kim}). Note that  $\lim_{q\to 1}c_{n,q}^{(k)}=c_n^{(k)}$ and $ \lim_{q\to 1}\lif_{k,q}(t)=\lif_k(t)$.

\smallskip

In this paper, we introduce a $(p,q)$-analogue of the poly-Euler polynomials by
\begin{align} \label{polyeuler}
\frac{2\li_{k,p,q}(1-e^{-t})}{1+e^{t}}e^{xt}=\sum_{n=0}^{\infty}E_{n,p,q}^{(k)}(x)\frac{t^n}{n!}, \quad (k \in \Z)
\end{align}
with $p$ and $q$ real numbers such that $0<q<p \leq 1$, and $$\li_{k,p,q}(t)=\sum_{n=1}^\infty\frac{t^n}{[n]_{p,q}^k}$$
is an extension of the $q$-polylogarithm function and we call it the $(p,q)$-polylogarithm function. The polynomials  $E_{n,p,q}^{(k)}(0):=E_{n,p,q}^{(k)}$ are called $(p,q)$-poly-Euler numbers.  The polynomial $[n]_{p,q}=\frac{p^n-q^n}{p-q}$ is the $n$-th $(p,q)$-integer~(cf. \cite{Hou, Jag, Sah}), it was introduced in the context of set partition statistics~(cf.~\cite{WW}).
Note that $\lim_{p\to 1}[n]_{p,q}=[n]_q$ and  $ \lim_{p\to 1}\lif_{k,p,q}(t)=\lif_{k,q}(t)$.

\smallskip

 As we already mentioned the $(p,q)$-analogues are an extension of the $q$-analogues, and coincide in the limit  when $p$ tends to $1$.
 The $(p,q)$-calculus was studied in~\cite{CJ}, in  connection with quantum mechanics.
 Properties of the $(p,q)$-analogues of the binomial coefficients were studied in~\cite{Corcino}.
 The $(p,q)$-analogues of hypergeometric series, special functions, Stirling numbers, Hermite polynomials have been studied before, see for instance~\cite{Jag,Nish, SS,SW}.\\

The paper is divided in two parts. In Section 2  we show several combinatorial identities of the $(p,q)$-poly-Euler polynomials. Some of them involving the classical Euler polynomials and another special numbers and polynomials such as the Stirling numbers of the second kind, Bernoulli polynomials of order $s$, etc.  In Section 3 we introduce the $(p,q)$-poly-Bernoulli polynomials  and $(p,q)$-poly-Cauchy polynomials of both kinds, and we generalize some well-known identities  of the classical Bernoulli and Cauchy numbers and polynomials.

\section{Some Identities of the Poly-Euler polynomials}
In this section, we give several identities of the $(p,q)$-poly-Euler polynomials. In particular,  Theorem \ref{eucls}  shows a relation between the $(p,q)$-poly-Euler polynomials and the classical Euler polynomials. \\

It is possible to give the first values of the  $(p,q)$-polylogarithm function for $k\leq 0$. For example,
\begin{align*}
\li_{0,p,q}(x)&=\frac{x}{1-x},\\
\li_{-1,p,q}(x)&=\frac{x}{(1-p x) (1-q x)},\\
\li_{-2,p,q}(x)&=\frac{x (1+pqx)}{\left(1-p^2 x\right) \left(1-q^2 x\right) (1-p q x)},\\
\li_{-3,p,q}(x)&=\frac{x \left(p^3 q^3 x^2+2 p^2 q x+2 p q^2 x+1\right)}{\left(1-p^3 x\right)
   \left(1-q^3 x\right) \left(1-p^2 q x\right) \left(1-p q^2 x\right)}.
\end{align*}

In general, the $(p,q)$-polylogarithm function for $k\leq 0$ is a rational function. Indeed, let $k$  be a nonnegative integer then
\begin{align*}
\li_{-k,p,q}(x)&=\sum_{n=1}^\infty \frac{x^n}{[n]_{p,q}^{-k}}=\sum_{n=1}^\infty [n]_{p,q}^{k}x^n=\sum_{n=1}^\infty \left(\frac{p^n-q^n}{p-q}\right)^kx^n\\
&=\frac{1}{(p-q)^k} \sum_{n=1}^\infty  \sum_{l=0}^k\binom kl p^{nl}(-q^n)^{k-l}x^n=\frac{1}{(p-q)^k} \sum_{l=0}^k(-1)^{k-l}  \binom kl \frac{p^lq^{k-l}x}{1-p^lq^{k-l}x}.
\end{align*}

Note that from \eqref{polyeuler} we obtain that  $\{E_{n,p,q}^{(k)}(x)\}_{n\geq0}$ is an Appel sequence \cite{Roman}. Therefore,  we have the following basic relations.

\begin{theorem}
If $n\geq 0$  and $k\in \Z$ then
\begin{enumerate}[label={(\roman*)}]
\item $\begin{aligned}[t]
E_{n,p,q}^{(k)}(x) &= \sum_{i=0}^n \binom ni E_{i,p,q}^{(k)}x^{n-i}.
 \end{aligned}$
\item  $\begin{aligned}[t]
E_{n,p,q}^{(k)}(x+y) &= \sum_{i=0}^n \binom ni E_{i,p,q}^{(k)}(x)y^{n-i}.
 \end{aligned}$
\item  $\begin{aligned}[t]
E_{n,p,q}^{(k)}(mx) &= \sum_{i=0}^n \binom ni E_{i,p,q}^{(k)}(x)(m-1)^{n-i}x^{n-i}, \ m\geq 1.
 \end{aligned}$
 \item  $\begin{aligned}[t]
E_{n,p,q}^{(k)}(x+1) - E_{n,p,q}^{(k)}(x)&= \sum_{i=0}^{n-1} \binom ni E_{i,p,q}^{(k)}(x).
 \end{aligned}$

\end{enumerate}

\end{theorem}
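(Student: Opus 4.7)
The plan is to work entirely at the level of generating functions, since all four identities are standard consequences of the Appell-sequence structure already noted in the paragraph preceding the theorem. Let $F(t) := \dfrac{2\li_{k,p,q}(1-e^{-t})}{1+e^t}$, so that the defining relation \eqref{polyeuler} reads
\begin{equation*}
\sum_{n=0}^{\infty} E_{n,p,q}^{(k)}(x)\frac{t^n}{n!} = F(t)\,e^{xt},\qquad \sum_{n=0}^{\infty} E_{n,p,q}^{(k)}\frac{t^n}{n!} = F(t).
\end{equation*}
Every identity will come out of multiplying $F(t)$ by an appropriate exponential factor and applying the Cauchy product, which accounts for the binomial coefficients.

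For (i), I would write $F(t)e^{xt} = \bigl(\sum_i E_{i,p,q}^{(k)} t^i/i!\bigr)\bigl(\sum_j x^j t^j/j!\bigr)$ and extract the coefficient of $t^n/n!$; the factor $n!/(i!(n-i)!)$ produces $\binom{n}{i}$ and yields the stated formula. For (ii), the identity $F(t)e^{(x+y)t} = \bigl(F(t)e^{xt}\bigr)\,e^{yt}$ gives, after a Cauchy product of the two exponential generating functions,
\begin{equation*}
E_{n,p,q}^{(k)}(x+y) \;=\; \sum_{i=0}^{n}\binom{n}{i} E_{i,p,q}^{(k)}(x)\,y^{n-i},
\end{equation*}
which is the asserted formula. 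Item (iii) is then immediate by specializing $y=(m-1)x$ in (ii), so that $x+y=mx$ and $y^{n-i}=(m-1)^{n-i}x^{n-i}$. Item (iv) is the case $y=1$ of (ii): one obtains $E_{n,p,q}^{(k)}(x+1)=\sum_{i=0}^{n}\binom{n}{i} E_{i,p,q}^{(k)}(x)$, and subtracting the $i=n$ term (which equals $E_{n,p,q}^{(k)}(x)$) produces the claimed difference.

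There is no real obstacle here; the only thing to be mildly careful about is to ensure that $F(t)$ has a power series expansion in $t$ at the origin, which is clear since $\li_{k,p,q}(1-e^{-t})$ vanishes to first order in $t$ and cancels the simple factor of $2$ in the numerator against the regular denominator $1+e^t$ (which equals $2$ at $t=0$). Everything else is a routine Cauchy-product manipulation, with the binomial coefficients arising because the generating functions are exponential.
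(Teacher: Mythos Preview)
Your proposal is correct and follows exactly the route the paper intends: the paper gives no detailed proof, merely observing before the theorem that the generating function \eqref{polyeuler} makes $\{E_{n,p,q}^{(k)}(x)\}_{n\ge 0}$ an Appell sequence, and then listing (i)--(iv) as immediate consequences. Your generating-function/Cauchy-product argument is precisely how one unpacks that remark; the only minor quibble is the phrasing about ``cancelling the factor $2$''---no cancellation is needed, since $1+e^t$ is simply nonzero at $t=0$---but this does not affect the argument.
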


\begin{theorem}\label{eucls}
If $n\geq 1$ we have
\begin{align*}
E_{n,p,q}^{(k)}(x)=\sum_{l=0}^\infty \frac{1}{[l+1]_{p,q}^k}\sum_{j=0}^{l+1}\binom{l+1}{j}(-1)^jE_n(x-j).
\end{align*}
\end{theorem}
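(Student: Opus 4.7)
The plan is to read off the identity directly from the defining generating function \eqref{polyeuler} by factoring it as a product and recognising the classical Euler polynomial generating function \eqref{peuler} inside it. Write
\begin{align*}
\sum_{n=0}^{\infty} E_{n,p,q}^{(k)}(x)\frac{t^n}{n!}
= \frac{2\li_{k,p,q}(1-e^{-t})}{1+e^{t}}\,e^{xt}
= \li_{k,p,q}(1-e^{-t})\cdot\frac{2e^{xt}}{e^{t}+1}.
\end{align*}
The second factor is exactly the exponential generating function of $E_n(x)$, so after shifting by $-j$ it equals $\sum_n E_n(x-j)\,t^n/n!$ times $e^{-jt}$ pulled out from the first factor.

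Next I would expand the first factor. By definition,
\begin{align*}
\li_{k,p,q}(1-e^{-t})=\sum_{l=1}^{\infty}\frac{(1-e^{-t})^{l}}{[l]_{p,q}^{k}}=\sum_{l=0}^{\infty}\frac{(1-e^{-t})^{l+1}}{[l+1]_{p,q}^{k}},
\end{align*}
and the binomial theorem gives $(1-e^{-t})^{l+1}=\sum_{j=0}^{l+1}\binom{l+1}{j}(-1)^{j}e^{-jt}$. Substituting back and using $e^{-jt}\cdot \frac{2e^{xt}}{e^{t}+1}=\sum_{n\ge 0}E_n(x-j)\,t^n/n!$ yields
\begin{align*}
\sum_{n=0}^{\infty} E_{n,p,q}^{(k)}(x)\frac{t^n}{n!}
=\sum_{l=0}^{\infty}\frac{1}{[l+1]_{p,q}^{k}}\sum_{j=0}^{l+1}\binom{l+1}{j}(-1)^{j}\sum_{n=0}^{\infty}E_n(x-j)\frac{t^n}{n!}.
\end{align*}
Equating coefficients of $t^{n}/n!$ then produces the claimed formula.

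The only delicate point is justifying the interchange of the sum over $l$ with the coefficient extraction, because the outer sum runs to infinity. Here one notes that $(1-e^{-t})^{l+1}=O(t^{l+1})$, so only finitely many $l$ contribute to any fixed power $t^{n}$; equivalently, the inner sum $\sum_{j=0}^{l+1}\binom{l+1}{j}(-1)^{j}E_n(x-j)$ is the $(l+1)$-fold application of the backward difference $I-E^{-1}$ to the polynomial $E_n$ of degree $n$, and hence vanishes whenever $l\ge n$. This makes the interchange legal at the level of formal power series and simultaneously shows that the right-hand side is in fact a finite sum, despite its presentation as an infinite series.
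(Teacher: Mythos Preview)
Your proof is correct and follows exactly the same route as the paper: expand $\li_{k,p,q}(1-e^{-t})$ as $\sum_{l\ge 0}(1-e^{-t})^{l+1}/[l+1]_{p,q}^k$, binomially expand $(1-e^{-t})^{l+1}$, recognise $\frac{2e^{(x-j)t}}{1+e^t}$ as the generating function of $E_n(x-j)$, and compare coefficients. Your extra paragraph justifying the interchange (via $(1-e^{-t})^{l+1}=O(t^{l+1})$ and the vanishing of the iterated backward difference for $l\ge n$) is a refinement the paper does not make explicit.
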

\begin{proof}
From (\ref{polyf}) and (\ref{polyeuler}) we get
\begin{align*}
\frac{2\li_{k,p,q}(1-e^{-t})}{1+e^{t}}e^{xt}&=\sum_{l=0}^\infty\frac{(1-e^{-t})^{l+1}}{[l+1]^k_{p,q}}\cdot \frac{2e^{xt}}{1+e^t}\\
&=\sum_{l=0}^\infty\frac{1}{[l+1]^k_{p,q}}\sum_{j=0}^{l+1}\binom{l+1}{j}(-1)^j \frac{2e^{(x-j)t}}{1+e^t}\\
&=\sum_{l=0}^\infty\frac{1}{[l+1]^k_{p,q}}\sum_{j=0}^{l+1}\binom{l+1}{j}(-1)^j \sum_{n=0}^\infty E_n(x-j) \frac{t^n}{n!}.
\end{align*}
Comparing the coefficients on both sides, we get the desired result.
\end{proof}

\begin{theorem}
If $n\geq 1$ we have
\begin{align*}
E_{n,p,q}^{(k)}(x)=\sum_{l=0}^\infty\sum_{i=0}^l\sum_{j=0}^{i+1} \frac{2(-1)^{l-i-j}}{[i+1]^k_{p,q}}\binom{i+1}{j}(l-i-j+x)^n.
\end{align*}
\end{theorem}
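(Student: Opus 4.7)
The plan is to work directly from the generating function~\eqref{polyeuler} in parallel with the proof of the preceding theorem, but to push one expansion further: instead of stopping at $\frac{2e^{(x-j)t}}{1+e^t}$ and recognizing it as the Euler generating series, I would expand $\frac{1}{1+e^t}$ as a formal geometric series and read off coefficients of $t^n/n!$ explicitly.

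Concretely, I would first repeat the opening of the previous proof, writing
\begin{align*}
\frac{2\li_{k,p,q}(1-e^{-t})}{1+e^t}e^{xt}
&=\sum_{i=0}^\infty\frac{1}{[i+1]_{p,q}^k}\sum_{j=0}^{i+1}\binom{i+1}{j}(-1)^j\,\frac{2e^{(x-j)t}}{1+e^t}.
\end{align*}
Next, viewing $\frac{1}{1+e^t}=\sum_{m=0}^\infty(-1)^m e^{mt}$ as a formal identity, I would substitute to get
\begin{align*}
\frac{2e^{(x-j)t}}{1+e^t}
=2\sum_{m=0}^\infty(-1)^m e^{(x-j+m)t}
=\sum_{n=0}^\infty\frac{t^n}{n!}\cdot 2\sum_{m=0}^\infty(-1)^m (x-j+m)^n.
\end{align*}
Comparing coefficients of $t^n/n!$ in the generating function for $E_{n,p,q}^{(k)}(x)$ then yields
\begin{align*}
E_{n,p,q}^{(k)}(x)=\sum_{i=0}^\infty\sum_{m=0}^\infty\sum_{j=0}^{i+1}\frac{2(-1)^{m+j}}{[i+1]_{p,q}^k}\binom{i+1}{j}(x+m-j)^n.
\end{align*}

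The last step is a reindexing: setting $l=m+i$, so that $0\le i\le l$ and $m=l-i$, the exponent becomes $(-1)^{m+j}=(-1)^{l-i+j}=(-1)^{l-i-j}$ (the last equality since $(-1)^{2j}=1$), while $x+m-j = x+l-i-j$. The double sum $\sum_{i\ge 0}\sum_{m\ge 0}$ is thereby rearranged as $\sum_{l\ge 0}\sum_{i=0}^l$, giving exactly the triple sum in the statement.

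The only real subtlety is that the geometric expansion of $(1+e^t)^{-1}$ and the resulting inner sum $\sum_{m}(-1)^m(x-j+m)^n$ are not termwise convergent, so each step must be interpreted as an equality of formal power series in $t$ rather than of numerical series. This is harmless because the reindexing $l=m+i$ produces, for each fixed $l$, only finitely many contributing terms (finitely many pairs $(i,j)$ with $i\le l$ and $j\le i+1$), which is precisely the grouping exhibited in the theorem; the convergence question is thus absorbed into the outer sum over $l$, and no regularization beyond the standard formal-power-series framework is needed to justify the identity as stated.
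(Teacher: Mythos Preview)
Your proof is correct and follows essentially the same approach as the paper: both arguments use the three expansions (the $(p,q)$-polylogarithm series, the binomial expansion of $(1-e^{-t})^{i+1}$, and the formal geometric series $\frac{1}{1+e^t}=\sum_{m\ge 0}(-1)^m e^{mt}$) and then compare coefficients of $t^n/n!$. The only cosmetic difference is the order: the paper multiplies the geometric series and the polylogarithm series first via a Cauchy product (so the index $l$ with $0\le i\le l$ appears immediately), whereas you expand the polylogarithm and the binomial first (mirroring the preceding theorem) and introduce the $l=m+i$ structure by reindexing at the end; your remark on the formal nature of the geometric expansion applies equally to the paper's computation.
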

\begin{proof}
By using the binomial series we get
\begin{align*}
\frac{2\li_{k,p,q}(1-e^{-t})}{1+e^{t}}e^{xt}&=2\left(\sum_{l=0}^\infty(-1)^le^{lt}\right) \left(\sum_{l=0}^\infty \frac{(1-e^{-t})^{l+1}}{[l+1]_{p,q}^k}\right)e^{xt}\\
&=2\sum_{l=0}^\infty\sum_{i=0}^l \frac{(-1)^{l-i}e^{(l-i)t}}{[i+1]_{p,q}^k}(1-e^{-t})^{i+1}e^{xt}\\
&=\left(2\sum_{l=0}^\infty\sum_{i=0}^l \frac{(-1)^{l-i}e^{(l-i)t}}{[i+1]_{p,q}^k}\right) \left(\sum_{j=0}^{i+1}\binom{i+1}{j}(-1)^je^{-tj}e^{xt}\right)\\
&=2\sum_{l=0}^\infty\sum_{i=0}^l\sum_{j=0}^{i+1} \frac{(-1)^{l-i+j}e^{(l-i-j+x)t}}{[i+1]_{p,q}^k}\binom{i+1}{j}\\
&=2\sum_{l=0}^\infty\sum_{i=0}^l \sum_{j=0}^{i+1} \frac{(-1)^{l-i+j}}{[i+1]_{p,q}^k}\binom{i+1}{j}\sum_{n=0}^\infty (l-i-j+x)^n\frac{t^n}{n!}\\
&=\sum_{n=0}^\infty \sum_{l=0}^\infty\sum_{i=0}^l \sum_{j=0}^{i+1} \frac{2(-1)^{l-i+j}}{[i+1]_{p,q}^k}\binom{i+1}{j}(l-i-j+x)^n\frac{t^n}{n!}.
\end{align*}
Comparing the coefficients on both sides, we get the desired result.
\end{proof}

\subsection{Some Relations with Other Special Polynomials}
Jolany et al. \cite{JCK} discovered several combinatorics identities involving generalized poly-Euler polynomials
in terms of Stirling numbers of the second kind $S_2(n,k)$, rising factorial functions $(x)^{(m)}$, falling factorial functions $(x)_m$, Bernoulli polynomials $\mathfrak B_n^{(s)}(x)$ of order $s$, and Frobenius-Euler functions $H_n^{(s)}(x;u)$.   We will give similar expressions in terms of  $(p,q)$-poly-Euler polynomials

Remember that the Stirling numbers of the second kind are defined by
\begin{align} \label{sti}
\frac{(e^x-1)^m}{m!}=\sum_{n=m}^\infty S_2(n,m)\frac{x^n}{n!}.
\end{align}

\begin{theorem}
We have the following  identity
\begin{align}
E_{n,p,q}^{(k)}(x)&=\sum_{l=0}^\infty \sum_{i=l}^n \binom ni S_2(i,l) E_{n-i,p,q}^{(k)}(-l)(x)^{(l)} \label{tid1}
\end{align}
where
\begin{align*}
(x)^{(m)}&=x(x+1)\cdots(x+m-1)\quad(m\ge 1)\quad\hbox{with}\quad (x)^{(0)}=1.
\end{align*}
\end{theorem}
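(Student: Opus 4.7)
The plan is to expand $e^{xt}$ inside the defining generating function \eqref{polyeuler} in a way that creates rising factorials $(x)^{(l)}$ on one side and Stirling numbers $S_2(i,l)$ on the other. The pivotal identity is
\begin{equation*}
e^{xt} \;=\; \sum_{l=0}^{\infty} (x)^{(l)}\, \frac{(1-e^{-t})^{l}}{l!},
\end{equation*}
which follows from the binomial series $(1-u)^{-x}=\sum_{l\ge 0}(x)^{(l)} u^{l}/l!$ applied with $u=1-e^{-t}$, so that $(1-u)^{-x}=e^{xt}$.

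Next I would rewrite $(1-e^{-t})^{l}/l!$ so that Stirling numbers appear. Since $(1-e^{-t})^{l}=e^{-lt}(e^{t}-1)^{l}$, the defining relation \eqref{sti} gives
\begin{equation*}
\frac{(1-e^{-t})^{l}}{l!} \;=\; e^{-lt}\,\frac{(e^{t}-1)^{l}}{l!} \;=\; e^{-lt}\sum_{i=l}^{\infty} S_2(i,l)\,\frac{t^{i}}{i!}.
\end{equation*}
Substituting these two expansions into \eqref{polyeuler}, I would obtain
\begin{equation*}
\sum_{n=0}^{\infty} E_{n,p,q}^{(k)}(x)\frac{t^{n}}{n!} \;=\; \sum_{l=0}^{\infty}(x)^{(l)}\!\left(\frac{2\li_{k,p,q}(1-e^{-t})}{1+e^{t}}e^{-lt}\right)\!\sum_{i=l}^{\infty}S_2(i,l)\frac{t^{i}}{i!}.
\end{equation*}

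The key recognition is that the quantity in parentheses is precisely the exponential generating function for $E_{m,p,q}^{(k)}(-l)$, i.e.\ the generating function \eqref{polyeuler} evaluated at $x=-l$. Multiplying the two power series in $t$ via the Cauchy product then produces a coefficient of $t^{n}/n!$ equal to
\begin{equation*}
\sum_{i=l}^{n}\binom{n}{i} S_2(i,l)\, E_{n-i,p,q}^{(k)}(-l),
\end{equation*}
and after summing over $l$ and comparing the coefficients of $t^{n}/n!$ with the left-hand side, identity \eqref{tid1} drops out. No serious obstacle is expected: the only delicate point is keeping the double summation indices consistent (noting that the inner sum starts at $i=l$ because $S_2(i,l)=0$ for $i<l$) and justifying the interchange of the $l$-sum with the $n$-sum, which is legitimate as a formal power series identity.
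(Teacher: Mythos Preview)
Your proposal is correct and follows essentially the same route as the paper's proof: both expand $e^{xt}=(1-(1-e^{-t}))^{-x}$ via the binomial series to produce $(x)^{(l)}(1-e^{-t})^{l}/l!$, rewrite $(1-e^{-t})^{l}=e^{-lt}(e^{t}-1)^{l}$ to bring in the Stirling numbers via \eqref{sti}, recognize the remaining factor as the generating function of $E_{n,p,q}^{(k)}(-l)$, and compare coefficients after a Cauchy product. The only difference is presentational.
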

\begin{proof}
From \eqref{polyeuler} and \eqref{sti}, and by the binomial series
$$\frac{1}{(1-x)^c}=\sum_{n=0}^\infty\binom{c+n-1}{n}x^n$$
we get:
\begin{align*}
\frac{2\li_{k,p,q}(1-e^{-t})}{1+e^{t}}e^{xt}&=\frac{2\li_{k,p,q}(1-e^{-t})}{1+e^{t}}(1-(1-e^{-t}))^{-x}\\
&=\frac{2\li_{k,p,q}(1-e^{-t})}{1+e^{t}} \sum_{l=0}^\infty \binom{x+l-1}{l}(1-e^{-t})^l\\
&=\sum_{l=0}^\infty \frac{(x)^{(l)}}{l!}(1-e^{-t})^l \frac{2\li_{k,p,q}(1-e^{-t})}{1+e^{t}} \\
&=\sum_{l=0}^\infty (x)^{(l)}\frac{(e^t-1)^l}{l!} \left(\frac{2\li_{k,p,q}(1-e^{-t})}{1+e^{t}}e^{-tl}\right) \\
&=\sum_{l=0}^\infty (x)^{(l)}\left(\sum_{n=0}^\infty S_2(n,l)\frac{t^n}{n!}\right)  \left(\sum_{n=0}^\infty E_{n,p,q}^{(k)}(-l)\frac{t^n}{n!}\right) \\
&=\sum_{l=0}^\infty (x)^{(l)} \sum_{n=0}^\infty\left(\sum_{i=0}^n \binom ni S_2(i,l) E_{n-i,p,q}^{(k)}(-l)\right)\frac{t^n}{n!} \\
&=\sum_{n=0}^\infty \left( \sum_{l=0}^\infty \sum_{i=l}^n \binom ni S_2(i,l) E_{n-i,p,q}^{(k)}(-l)(x)^{(l)}\right)\frac{t^n}{n!}.
\end{align*}
Comparing the coefficients on both sides, we have \eqref{tid1}. Note that we use the following relation
\begin{align*}\binom{x+l-1}{s}=\frac{(x)^{(l)}}{s!}.\end{align*}\qedhere
\end{proof}

\begin{theorem}
We have the following  identity
\begin{align}
E_{n,p,q}^{(k)}(x)&=\sum_{l=0}^\infty \sum_{i=l}^n \binom ni S_2(i,l) E_{n-i,p,q}^{(k)}(x)_l, \label{tid2}
\end{align}
where
\begin{align*}
(x)_m&=x(x-1)\cdots(x-m+1)\quad(m\ge 1)\quad\hbox{with}\quad (x)_0=1.
\end{align*}
\end{theorem}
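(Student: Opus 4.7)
The plan is to mirror the proof of \eqref{tid1}, but replacing the rising-factorial binomial expansion by the falling-factorial one. Specifically, I will use the identity $(1+u)^x = \sum_{l=0}^\infty \binom{x}{l} u^l$ with the substitution $u = e^t - 1$, which yields
\begin{align*}
e^{xt} = \bigl(1+(e^t-1)\bigr)^x = \sum_{l=0}^\infty \binom{x}{l}(e^t-1)^l = \sum_{l=0}^\infty \frac{(x)_l}{l!}(e^t-1)^l.
\end{align*}
Compare with the previous proof, where the choice $u = 1-e^{-t}$ together with the negative-binomial series forced a rising factorial $(x)^{(l)}$; here the natural appearance of the falling factorial $(x)_l$ is exactly what the statement demands.

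Next I substitute this expansion for $e^{xt}$ into the defining generating function \eqref{polyeuler}, obtaining
\begin{align*}
\sum_{n=0}^\infty E_{n,p,q}^{(k)}(x)\frac{t^n}{n!} = \frac{2\li_{k,p,q}(1-e^{-t})}{1+e^{t}} \sum_{l=0}^\infty (x)_l\, \frac{(e^t-1)^l}{l!}.
\end{align*}
The crucial difference with the proof of \eqref{tid1} is that, because the factor $e^{xt}$ is entirely absorbed into the binomial expansion (rather than being converted to a shift $e^{-tl}$), the remaining expression $2\li_{k,p,q}(1-e^{-t})/(1+e^t)$ generates the $(p,q)$-poly-Euler \emph{numbers} $E_{n,p,q}^{(k)}$ themselves, not the shifted polynomial values $E_{n,p,q}^{(k)}(-l)$ that appeared before.

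Finally, I will invoke \eqref{sti} to write $(e^t-1)^l/l! = \sum_{n\ge l} S_2(n,l)\, t^n/n!$ and apply the Cauchy product to the two resulting exponential generating functions. Equating coefficients of $t^n/n!$, and using $S_2(i,l)=0$ for $i<l$ to obtain the lower bound $i\ge l$ on the inner sum, delivers the identity \eqref{tid2}. I foresee no genuine obstacle: the argument is structurally parallel to that of \eqref{tid1}, with the sole substantive step being the choice of base $1+(e^t-1)$ in the binomial expansion so as to produce falling rather than rising factorials and to avoid the shift in the argument of the poly-Euler coefficients.
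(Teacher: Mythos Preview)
Your proposal is correct and follows essentially the same route as the paper: write $e^{xt}=((e^t-1)+1)^x=\sum_{l\ge 0}\binom{x}{l}(e^t-1)^l=\sum_{l\ge 0}\frac{(x)_l}{l!}(e^t-1)^l$, combine this with the generating function for the $(p,q)$-poly-Euler numbers $E_{n,p,q}^{(k)}$ and the Stirling generating function \eqref{sti}, take the Cauchy product, and compare coefficients. The paper's proof is identical in substance, including the observation that no shift appears because the factor $e^{xt}$ is entirely absorbed into the falling-factorial expansion.
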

\begin{proof}
From \eqref{polyeuler} and \eqref{sti}
\begin{align*}
\frac{2\li_{k,p,q}(1-e^{-t})}{1+e^{t}}e^{xt}&=\frac{2\li_{k,p,q}(1-e^{-t})}{1+e^{t}}((e^{t}-1)+1)^x\\
&=\frac{2\li_{k,p,q}(1-e^{-t})}{1+e^{t}} \sum_{l=0}^\infty \binom{x}{l}(e^{t}-1)^l\\
&=\sum_{l=0}^\infty \frac{(x)_l}{l!}(e^{t}-1)^l \frac{2\li_{k,p,q}(1-e^{-t})}{1+e^{t}} \\
&=\sum_{l=0}^\infty (x)_l \left(\sum_{n=0}^\infty S_2(n,l)\frac{t^n}{n!}\right) \left(\sum_{n=0}^\infty E_{n,p,q}^{(k)}\frac{t^n}{n!}\right) \\
&=\sum_{l=0}^\infty (x)_l \sum_{n=0}^\infty\left(\sum_{i=0}^n \binom ni S_2(i,l) E_{n-i,p,q}^{(k)}\right)\frac{t^n}{n!} \\
&=\sum_{n=0}^\infty \left( \sum_{l=0}^\infty \sum_{i=l}^n \binom ni S_2(i,l) E_{n-i,p,q}^{(k)}(x)_l\right)\frac{t^n}{n!}.
\end{align*}
Comparing the coefficients on both sides, we have \eqref{tid2}. Note that we use the following relation
$$\binom{x}{s}=\frac{(x)_s}{s!}.$$
\end{proof}

 The Bernoulli polynomials $\mathfrak B_n^{(s)}(x)$ of order $s$ are defined by
\begin{align}\label{bergs}
\left(\frac{t}{e^t-1}\right)^s e^{x t}=\sum_{n=0}^\infty\mathfrak B_n^{(s)}(x)\frac{t^n}{n!}.
\end{align}
It is clear that if $s=1$ we recover the classical Bernoulli polynomials.   For some explicit formulae of these polynomials see for example \cite{Liu}.

\begin{theorem}
We have the following  identity
\begin{align}
E_{n,p,q}^{(k)}(x)&=\sum_{l=0}^n\binom nl S_2(l+s,s)   \sum_{i=0}^{n-l} \frac{\binom{n-l}{i}}{\binom{l+s}{s}} \mathfrak B_i^{(s)}(x) E_{n-l-i,p,q}^{(k)}. \label{tid3}
\end{align}
\end{theorem}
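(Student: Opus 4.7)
The plan is to factor the defining generating function (\ref{polyeuler}) for $E_{n,p,q}^{(k)}(x)$ as a product of three known exponential generating functions, and then read off the claimed identity from the triple Cauchy product. Concretely, I would multiply and divide by $(e^t-1)^s/t^s$ to write
\begin{align*}
\frac{2\li_{k,p,q}(1-e^{-t})}{1+e^{t}}e^{xt}
= \frac{2\li_{k,p,q}(1-e^{-t})}{1+e^{t}} \cdot \frac{(e^t-1)^s}{t^s} \cdot \left(\frac{t}{e^t-1}\right)^s e^{xt}.
\end{align*}
The first factor is the EGF of the $(p,q)$-poly-Euler numbers $E_{n,p,q}^{(k)} = E_{n,p,q}^{(k)}(0)$ by \eqref{polyeuler}, and the third factor is the EGF of the Bernoulli polynomials of order $s$ by \eqref{bergs}.

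The crucial intermediate step is to turn the middle factor into an explicit EGF. Starting from \eqref{sti}, I would shift the summation index to obtain
\begin{align*}
\frac{(e^t-1)^s}{t^s} = s!\sum_{n=s}^\infty S_2(n,s)\frac{t^{n-s}}{n!} = \sum_{m=0}^\infty \frac{s!\,S_2(m+s,s)}{(m+s)!}\,t^m = \sum_{m=0}^\infty \frac{S_2(m+s,s)}{\binom{m+s}{s}}\frac{t^m}{m!},
\end{align*}
using the relation $s!\,m!/(m+s)! = 1/\binom{m+s}{s}$. This is really the only piece of genuine algebra in the argument; once it is in place, the identity is a formal consequence of EGF multiplication.

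Next I would apply the triple Cauchy product for EGFs,
\begin{align*}
\left(\sum_i a_i\frac{t^i}{i!}\right)\left(\sum_l b_l\frac{t^l}{l!}\right)\left(\sum_m c_m\frac{t^m}{m!}\right) = \sum_{n=0}^\infty \left(\sum_{i+l+m=n}\binom{n}{i,l,m} a_i b_l c_m\right)\frac{t^n}{n!},
\end{align*}
with $a_i = E_{i,p,q}^{(k)}$, $b_l = S_2(l+s,s)/\binom{l+s}{s}$, and $c_m = \mathfrak B_m^{(s)}(x)$. After rewriting $\binom{n}{i,l,m} = \binom{n}{l}\binom{n-l}{i}$ and eliminating $m = n-l-i$, comparison with $\sum_n E_{n,p,q}^{(k)}(x) t^n/n!$ yields exactly \eqref{tid3}.

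The main obstacle is the bookkeeping in the second step: keeping track of the factorials and the index shift so that the Stirling factor emerges with the denominator $\binom{l+s}{s}$ in the stated form. Everything else, namely the factoring of the generating function, the identification of the two outer EGFs with $E_{n,p,q}^{(k)}$ and $\mathfrak B_n^{(s)}(x)$, and the triple convolution, is routine.
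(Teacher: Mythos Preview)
Your proposal is correct and follows essentially the same route as the paper: both multiply and divide the generating function \eqref{polyeuler} by $(e^t-1)^s/t^s$, identify the three resulting factors with the EGFs of $E_{n,p,q}^{(k)}$, the shifted Stirling numbers $S_2(l+s,s)/\binom{l+s}{s}$, and $\mathfrak B_n^{(s)}(x)$, and then compare coefficients. The only cosmetic difference is that you absorb the factor $s!/t^s$ into the Stirling series at the outset, whereas the paper carries it along separately and combines it in the last line; the substance of the argument is identical.
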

\begin{proof}
From \eqref{polyeuler} and \eqref{bergs}
\begin{align*}
\frac{2\li_{k,p,q}(1-e^{-t})}{1+e^{t}}e^{xt}&=\frac{(e^t-1)^s}{s!} \frac{t^se^{xt}}{(e^t-1)^s} \left(\sum_{n=0}^\infty E_{n,p,q}^{(k)}\frac{t^n}{n!}\right) \frac{s!}{t^s} \\
&=\left(\sum_{n=0}^\infty S_2(n+s,s)\frac{t^{n+s}}{(n+s)!}\right) \left(\sum_{n=0}^\infty \mathfrak B_n^{(s)}(x) \frac{t^{n}}{n!}\right)  \left(\sum_{n=0}^\infty E_{n,p,q}^{(k)}\frac{t^n}{n!}\right) \frac{s!}{t^s}\\
&=\left(\sum_{n=0}^\infty S_2(n+s,s)\frac{t^{n+s}}{(n+s)!}\right) \sum_{n=0}^\infty \left(\sum_{i=0}^n \binom{n}{i} \mathfrak B_i^{(s)}(x) E_{n-i,p,q}^{(k)} \right)\frac{t^n}{n!}  \frac{s!}{t^s}\\
&=\sum_{n=0}^\infty \left(\sum_{l=0}^n S_2(l+s,s)\frac{t^{l+s}}{(l+s)!} \sum_{i=0}^{n-l} \binom{n-l}{i} \mathfrak B_i^{(s)}(x) E_{n-l-i,p,q}^{(k)}\frac{t^{n-l}}{(n-l)!} \right)  \frac{s!}{t^s}\\
&=\sum_{n=0}^\infty \left(\sum_{l=0}^n  \binom{n}{l} S_2(l+s,s) \sum_{i=0}^{n-l}  \frac{\binom{n-l}{i}}{\binom{l+s}{s}} \mathfrak B_i^{(s)}(x) E_{n-l-i,p,q}^{(k)} \right)   \frac{t^n}{n!}.
\end{align*}
Comparing the coefficients on both sides, we get \eqref{tid3}.
\end{proof}

The Frobenius-Euler functions $H_n^{(s)}(x;u)$ are defined by
\begin{align}\label{FEuler}
 \left(\frac{1-u}{e^t-u}\right)^s e^{x t}=\sum_{n=0}^\infty H_n^{(s)}(x;u)\frac{t^n}{n!}.
\end{align}

\begin{theorem}
We have the following  identity
\begin{align}
E_{n,p,q}^{(k)}(x)&=\sum_{l=0}^n\frac{\binom nl}{(1-u)^s} \sum_{i=0}^s\binom si (-u)^{s-i}H_l^{(s)}(x;u)E_{n-l,p,q}^{(k)}(i). \label{tid4}
\end{align}
\end{theorem}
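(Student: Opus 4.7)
The plan is to mimic the three previous identity proofs in the section: rewrite the defining generating function of $E_{n,p,q}^{(k)}(x)$ as a product of three series, one of which is the Frobenius-Euler generating function, then extract coefficients of $t^n/n!$.

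The key algebraic trick is to insert the factor $\left(\frac{1-u}{e^t-u}\right)^s \cdot \left(\frac{e^t-u}{1-u}\right)^s = 1$ into the right-hand side of \eqref{polyeuler}, which gives
\begin{align*}
\frac{2\li_{k,p,q}(1-e^{-t})}{1+e^{t}}e^{xt}
= \left(\frac{1-u}{e^t-u}\right)^s e^{xt}\cdot\frac{(e^t-u)^s}{(1-u)^s}\cdot\frac{2\li_{k,p,q}(1-e^{-t})}{1+e^{t}}.
\end{align*}
Then I expand the middle factor by the binomial theorem,
\begin{align*}
\frac{(e^t-u)^s}{(1-u)^s}=\frac{1}{(1-u)^s}\sum_{i=0}^{s}\binom{s}{i}(-u)^{s-i}e^{it},
\end{align*}
so that the $e^{it}$ combines with $\frac{2\li_{k,p,q}(1-e^{-t})}{1+e^{t}}$ to produce exactly the generating function of $E_{n,p,q}^{(k)}(i)$, while the first factor produces the generating function of $H_l^{(s)}(x;u)$ by \eqref{FEuler}.

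Next I would carry out the two Cauchy products. First, for each fixed $i$ in the binomial sum,
\begin{align*}
\left(\sum_{l=0}^{\infty}H_l^{(s)}(x;u)\frac{t^l}{l!}\right)\left(\sum_{m=0}^{\infty}E_{m,p,q}^{(k)}(i)\frac{t^m}{m!}\right)
=\sum_{n=0}^{\infty}\left(\sum_{l=0}^{n}\binom{n}{l}H_l^{(s)}(x;u)E_{n-l,p,q}^{(k)}(i)\right)\frac{t^n}{n!}.
\end{align*}
Then I pull out the outer $\frac{1}{(1-u)^s}\sum_{i=0}^{s}\binom{s}{i}(-u)^{s-i}$, compare coefficients of $t^n/n!$ with the left-hand side $\sum_{n\ge 0}E_{n,p,q}^{(k)}(x)t^n/n!$, and the identity \eqref{tid4} drops out.

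There is essentially no obstacle here: the step that requires the smallest bit of care is the bookkeeping when one interchanges the finite sum over $i$ with the Cauchy product, but since both sums are absolutely convergent as formal power series, this is routine. The proof parallels that of Theorem giving \eqref{tid3}, with $\left(\frac{1-u}{e^t-u}\right)^s$ playing the role there played by $\left(\frac{t}{e^t-1}\right)^s$; the identity $\sum_{i=0}^{s}\binom{s}{i}(-u)^{s-i}e^{it}=(e^t-u)^s$ is the analogue of $\sum_{n\ge 0}S_2(n+s,s)t^{n+s}/(n+s)!=(e^t-1)^s/s!$ used before.
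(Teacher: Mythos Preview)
Your proposal is correct and follows essentially the same route as the paper's own proof: insert the trivial factor $\left(\frac{1-u}{e^t-u}\right)^s\left(\frac{e^t-u}{1-u}\right)^s$, expand $(e^t-u)^s$ binomially so that $e^{it}$ combines with $\frac{2\li_{k,p,q}(1-e^{-t})}{1+e^t}$ to give the generating series of $E_{n,p,q}^{(k)}(i)$, recognize the remaining factor as the generating series of $H_l^{(s)}(x;u)$, perform the Cauchy product, and compare coefficients. The only difference is cosmetic ordering of the manipulations.
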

\begin{proof}
From \eqref{polyeuler} and \eqref{FEuler}
\begin{align*}
\frac{2\li_{k,p,q}(1-e^{-t})}{1+e^{t}}e^{xt}&=\frac{(1-u)^s}{(e^t-u)^s}e^{xt}\frac{(e^t-u)^s}{(1-u)^s}\frac{2\li_{k,p,q}(1-e^{-t})}{1+e^{t}}\\
&=\frac{1}{(1-u)^s} \left(\sum_{n=0}^\infty H_n^{(s)}(x;u)\frac{t^n}{n!}\right)\sum_{i=0}^s\binom si e^{ti}(-u)^{s-i} \frac{2\li_{k,p,q}(1-e^{-t})}{1+e^{t}}\\
&=\frac{1}{(1-u)^s} \left(\sum_{n=0}^\infty H_n^{(s)}(x;u)\frac{t^n}{n!}\right)\sum_{i=0}^s\binom si (-u)^{s-i} \sum_{n=0}^\infty E_{n,p,q}^{(k)}(i)\frac{t^n}{n!}\\
&=\frac{1}{(1-u)^s} \sum_{i=0}^s\binom si (-u)^{s-i}  \left(\sum_{n=0}^\infty H_n^{(s)}(x;u)\frac{t^n}{n!}\right)\left(\sum_{n=0}^\infty E_{n,p,q}^{(k)}(i)\frac{t^n}{n!}\right)\\
&=\frac{1}{(1-u)^s} \sum_{i=0}^s\binom si (-u)^{s-i} \sum_{n=0}^\infty\left( \sum_{l=0}^n\binom{n}{l} H_l^{(s)}(x;u) E_{n-l,p,q}^{(k)}(i)\right)\frac{t^n}{n!}\\
&= \sum_{n=0}^\infty \left( \frac{1}{(1-u)^s}  \sum_{l=0}^n\binom{n}{l}  \sum_{i=0}^s\binom si (-u)^{s-i} H_l^{(s)}(x;u) E_{n-l,p,q}^{(k)}(i)\right)\frac{t^n}{n!}.\\
\end{align*}
Comparing the coefficients on both sides, we get \eqref{tid4}.
\end{proof}

\section{The $(p,q)$-poly Bernoulli Polynomials and the $(p,q)$-poly poly-Cauchy polynomials}

In this section we introduce the $(p,q)$-poly Bernoulli polynomials by means of the $(p,q)$-polylogarithm function and the $(p,q)$-poly Cauchy  polynomials by using the $(p,q)$-integral.  In general it is not difficult  to extend the results of \cite{Kom4}.

The $(p,q)$-derivative of the function $f$ is defined by  (cf. \cite{Burban, Hou})
$$D_{p,q}f(x)=\begin{cases}
\frac{f(px)-f(qx)}{(p-q)x},& \text{if} \ x\neq 0,\\
f'(0),& \text{if} \ x=0.
\end{cases}
$$
In particular if $p\to 1$ we obtain the $q$-derivative \cite{Andrews}.
The $(p,q)$-integral of the function $f$ is defined by
$$\int_{0}^{x}f(t)d_{p,q}t=
\begin{cases}
(q-p)x\sum_{n=0}^\infty\frac{p^n}{q^{n+1}} f\left(\frac{p^n}{q^{n+1}}x\right), & \text{if} \  |p/q|<1;\\
(p-q)x\sum_{n=0}^\infty\frac{q^n}{p^{n+1}} f\left(\frac{q^n}{p^{n+1}}x\right), & \text{if} \  |p/q|>1.
\end{cases}$$

For example,
$$\int_{0}^{1}t^l d_{p,q}t=\frac{1}{[l+1]_{p,q}}.$$

We introduce the $(p,q)$-poly Bernoulli polynomials by
\begin{align*}
\frac{\li_{k,p,q}(1-e^{-t})}{1-e^{-t}}e^{-xt}=\sum_{n=0}^{\infty}B_{n,p,q}^{(k)}(x)\frac{t^n}{n!}, \quad (k\in \Z).
\end{align*}
In particular, $\lim_{p \to 1} B_{n,p,q}^{(k)}(x)=B_{n,q}^{(k)}(x)$, which are the $q$-poly-Bernoulli polynomials studied recently  in \cite{Kom4}.

The following theorem related the $(p,q)$-poly-Bernoulli  polynomials and  $(p,q)$-poly-Euler  polynomials.
\begin{theorem}
If $n\geq 1$ we have
$$E_{n,p,q}^{(k)}(x) + E_{n,p,q}^{(k)}(x+1)=2B_{n,p,q}^{(k)}(-x) - 2 B_{n,p,q}^{(k)}(1-x).$$
\end{theorem}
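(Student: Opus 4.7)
The plan is to prove this identity at the level of exponential generating functions and then match coefficients of $t^n/n!$. Both the sum $E_{n,p,q}^{(k)}(x)+E_{n,p,q}^{(k)}(x+1)$ on the left and the combination $2B_{n,p,q}^{(k)}(-x)-2B_{n,p,q}^{(k)}(1-x)$ on the right look engineered to collapse the denominators $1+e^t$ and $1-e^{-t}$, respectively, leaving the same thing behind.

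First I would write the left-hand side generating function using \eqref{polyeuler}:
\begin{align*}
\sum_{n=0}^\infty\bigl(E_{n,p,q}^{(k)}(x)+E_{n,p,q}^{(k)}(x+1)\bigr)\frac{t^n}{n!}
=\frac{2\li_{k,p,q}(1-e^{-t})}{1+e^{t}}\bigl(e^{xt}+e^{(x+1)t}\bigr),
\end{align*}
and factor $e^{xt}(1+e^t)$ out of the parenthesis, so the fraction collapses to $2\li_{k,p,q}(1-e^{-t})e^{xt}$.

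Next I would do the analogous manipulation on the right. Using the definition of the $(p,q)$-poly-Bernoulli polynomials with arguments $-x$ and $1-x$, the exponential factors become $e^{xt}$ and $e^{(x-1)t}$, so
\begin{align*}
\sum_{n=0}^\infty\bigl(2B_{n,p,q}^{(k)}(-x)-2B_{n,p,q}^{(k)}(1-x)\bigr)\frac{t^n}{n!}
=\frac{2\li_{k,p,q}(1-e^{-t})}{1-e^{-t}}\bigl(e^{xt}-e^{(x-1)t}\bigr),
\end{align*}
and pulling out $e^{xt}(1-e^{-t})$ cancels the denominator and again leaves exactly $2\li_{k,p,q}(1-e^{-t})e^{xt}$.

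Finally I would conclude by comparing coefficients of $t^n/n!$ on both sides. There is essentially no obstacle here: the whole argument is an algebraic identity of formal power series, and the ``trick'' is just the observation that the numerator shifts by $x$ versus $x{+}1$ (respectively $-x$ versus $1{-}x$) produce exactly the factors needed to kill $1+e^t$ (respectively $1-e^{-t}$). The mild subtlety worth mentioning is that the identity as obtained is valid for every $n\ge 0$, and in fact both sides vanish at $n=0$ since $\li_{k,p,q}(1-e^{-t})$ has no constant term; the statement is therefore recorded for $n\ge 1$ because that is the nontrivial range.
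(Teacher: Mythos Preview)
Your proof is correct and follows essentially the same route as the paper: both start from the trivial equality $\dfrac{2\li_{k,p,q}(1-e^{-t})}{1+e^t}(1+e^t)e^{xt}=\dfrac{2\li_{k,p,q}(1-e^{-t})}{1-e^{-t}}(1-e^{-t})e^{xt}$, expand each side via the defining generating functions, and equate coefficients of $t^n/n!$. The only difference is presentational---you work from the two generating-function sums toward the common expression $2\li_{k,p,q}(1-e^{-t})e^{xt}$, while the paper starts from that common expression and reads off the series on each side.
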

\begin{proof}
From the following equality
$$\frac{2\li_{k,p,q}(1-e^{-t})}{1+e^t}(1+e^t)e^{xt}=\frac{2\li_{k,p,q}(1-e^{-t})}{1-e^{-t}}(1-e^{-t})e^{xt}$$
we obtain
\begin{align*}
\sum_{n=0}^\infty E_{n,p,q}^{(k)}(x)\frac{t^n}{n!} + \sum_{n=0}^\infty E_{n,p,q}^{(k)}(x+1)\frac{t^n}{n!} = 2\sum_{n=0}^\infty B_{n,p,q}^{(k)}(-x)\frac{t^n}{n!} - 2\sum_{n=0}^\infty B_{n,p,q}^{(k)}(1-x)\frac{t^n}{n!}.
\end{align*}
Comparing the coefficients on both sides, we get the desired result.
\end{proof}

The weighted Stirling numbers of the second kind, $S_2(n,m,x)$, were defined by Carlitz \cite{Car} as follows
$$\frac{e^{xt}(e^t-1)^m}{m!}=\sum_{n=m}^\infty S_2(n,m,x)\frac{t^n}{n!}.$$

\begin{theorem}\label{polyberrel1}
If $n\geq 1$, we have
\begin{align*}
B_{n,p,q}^{(k)}(x)=\sum_{m=0}^n\frac{(-1)^{m+n}m!}{[m+1]^k_{p,q}}S_2(n,m,x).
\end{align*}
\end{theorem}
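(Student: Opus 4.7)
The plan is to expand the generating function for $B_{n,p,q}^{(k)}(x)$ directly using the series definitions of $\li_{k,p,q}$ and of the weighted Stirling numbers $S_2(n,m,x)$, then match coefficients of $t^n/n!$.

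First I would substitute the definition of $\li_{k,p,q}(1-e^{-t}) = \sum_{m=1}^\infty (1-e^{-t})^m/[m]_{p,q}^k$ into the generating function, noting that the leading factor $1-e^{-t}$ in the denominator cancels one copy of $(1-e^{-t})$. After reindexing $m \mapsto m+1$, the generating function becomes
\begin{align*}
\frac{\li_{k,p,q}(1-e^{-t})}{1-e^{-t}}e^{-xt}
= \sum_{m=0}^\infty \frac{(1-e^{-t})^m}{[m+1]_{p,q}^k}\,e^{-xt}
= \sum_{m=0}^\infty \frac{(-1)^m(e^{-t}-1)^m}{[m+1]_{p,q}^k}\,e^{-xt}.
\end{align*}

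Next I would bring in Carlitz's weighted Stirling numbers. The defining identity $e^{xt}(e^t-1)^m/m! = \sum_{n\ge m} S_2(n,m,x)\,t^n/n!$ gives, after replacing $t$ by $-t$,
\begin{align*}
e^{-xt}(e^{-t}-1)^m = m!\sum_{n=m}^\infty (-1)^n S_2(n,m,x)\,\frac{t^n}{n!}.
\end{align*}
Plugging this into the previous expression yields
\begin{align*}
\frac{\li_{k,p,q}(1-e^{-t})}{1-e^{-t}}e^{-xt}
= \sum_{m=0}^\infty \frac{(-1)^m m!}{[m+1]_{p,q}^k} \sum_{n=m}^\infty (-1)^n S_2(n,m,x)\,\frac{t^n}{n!}.
\end{align*}

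Finally I would swap the order of summation (the inner sum starts at $n=m$, so the double sum reorganizes into $\sum_{n\ge 0}\sum_{m=0}^n$) and compare coefficients of $t^n/n!$ with the defining series of $B_{n,p,q}^{(k)}(x)$, which directly produces the claimed formula
\begin{align*}
B_{n,p,q}^{(k)}(x) = \sum_{m=0}^n \frac{(-1)^{m+n}m!}{[m+1]_{p,q}^k} S_2(n,m,x).
\end{align*}
No real obstacle is expected; the only delicate steps are the sign bookkeeping after the substitution $t\mapsto -t$ (where $(-1)^m$ from $(1-e^{-t})^m = (-1)^m(e^{-t}-1)^m$ combines with the $(-1)^n$ coming from Carlitz's identity to give the exponent $m+n$) and the index shift turning $[m]_{p,q}^k$ into $[m+1]_{p,q}^k$. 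Both are purely formal manipulations, completely parallel to the analogous derivation for the $q$-case in \cite{Kom4}.
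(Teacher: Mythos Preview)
Your proposal is correct and follows essentially the same route as the paper: expand $\li_{k,p,q}(1-e^{-t})/(1-e^{-t})$ as $\sum_{m\ge 0}(1-e^{-t})^m/[m+1]_{p,q}^k$, rewrite $(1-e^{-t})^m=(-1)^m(e^{-t}-1)^m$, apply Carlitz's weighted Stirling identity with $t\mapsto -t$, swap the sums, and compare coefficients. The paper's argument is line-for-line the same, only with slightly less commentary on the index shift and sign tracking.
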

\begin{proof}
\begin{align*}
\sum_{n=0}^{\infty}B_{n,p,q}^{(k)}(x)\frac{t^n}{n!}&=\frac{\li_{p,q}(1-e^{-t})}{1-e^{-t}}e^{-xt}\\
&= \sum_{m=0}^{\infty} \frac{(1-e^{-t})^m}{[m+1]_{p,q}^k}e^{-xt}\\
&= \sum_{m=0}^{\infty} \frac{(-1)^mm!}{[m+1]_{p,q}^k}\cdot \frac{(e^{-t}-1)^m}{m!}e^{-xt}\\
&= \sum_{m=0}^{\infty} \frac{(-1)^mm!}{[m+1]_{p,q}^k}\cdot \sum_{n=m}^{\infty}S_2(n,m,x)\frac{(-t)^n}{n!}\\
&= \sum_{n=0}^{\infty}\left(\sum_{m=0}^{\infty}\frac{(-1)^{m+n}m!}{[m+1]_{p,q}^k}S_2(n,m,x)\right)\frac{t^n}{n!}.
\end{align*}
Comparing the coefficients on both sides, we get the desired result.
\end{proof}

The $(p,q)$-poly-Cauchy polynomials of the first kind are defined by
\begin{align}\label{pqcau}
C_{n,p,q}^{(k)}(x)&=\underbrace{\int_{0}^1 \cdots \int_{0}^1}_{k}(t_1\cdots t_k-x)_nd_{p,q}t_1\cdots d_{p,q}t_k.
\end{align}

Note that $\lim_{p\to 1}C_{n,p,q}^{(k)}(x)=C_{n,q}^{(k)}(x)$, i.e., we obtain the $q$-poly-Cauchy polynomials \cite{Kom4, Kim}.\\

Remember that the (unsigned) Stirling numbers of the first kind are defined by
\begin{align} \label{sti1}
\frac{(\ln(1+x))^m}{m!}=\sum_{n=m}^\infty (-1)^{n-m}S_1(n,m)\frac{x^n}{n!}.
\end{align}
Moreover, they satisfy (cf.  \cite{Comtet})
\begin{align}\label{sti1r}
x^{(n)}=x(x+1)\cdots (x+n-1)=\sum_{m=0}^nS_1(n,m)x^m.
\end{align}
The weighted Stirling numbers of the first kind, $S_1(n,m,x)$, are defined  by (\cite{Car})
$$\frac{(1-t)^{-x}(-\ln(1-t))^m}{m!}=\sum_{n=m}^\infty S_1(n,m,x)\frac{t^n}{n!}.$$

\begin{theorem}\label{teocaupq}
If $n\geq 1$, we have
\begin{align}
C_{n,p,q}^{(k)}(x)&=\sum_{m=0}^n(-1)^{n-m}S_1(n,m)\sum_{l=0}^m\binom ml \frac{(-x)^l}{[m-l+1]_{p,q}^k} \label{ec1cauchy}\\
&=\sum_{m=0}^nS_1\left(n,m,x\right)\frac{(-1)^{n-m}}{[m+1]_{p,q}^k}. \label{ec2cauchy}
\end{align}
\end{theorem}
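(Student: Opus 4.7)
The plan is to attack both identities (\ref{ec1cauchy}) and (\ref{ec2cauchy}) from the defining formula (\ref{pqcau}), exploiting the fact that the integrand factorises in the $t_i$, so that the $k$-fold $(p,q)$-integral reduces to a product of the single evaluation $\int_0^1 t^m d_{p,q}t = 1/[m+1]_{p,q}$ given earlier. In particular,
\begin{align*}
\underbrace{\int_0^1\!\!\cdots\!\!\int_0^1}_{k}(t_1\cdots t_k)^{m}\,d_{p,q}t_1\cdots d_{p,q}t_k = \frac{1}{[m+1]_{p,q}^k},
\end{align*}
and this is what all the Stirling bookkeeping will collapse onto.

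For (\ref{ec1cauchy}), I would first convert (\ref{sti1r}) into a formula for the falling factorial: using $x^{(n)} = \sum_{m} S_1(n,m) x^m$ and $(y)_n = (-1)^n(-y)^{(n)}$ one gets $(y)_n = \sum_{m=0}^n (-1)^{n-m} S_1(n,m) y^m$. Then set $y = t_1\cdots t_k - x$, expand $(t_1\cdots t_k - x)^m = \sum_{l=0}^m\binom{m}{l}(-x)^l (t_1\cdots t_k)^{m-l}$ by the binomial theorem, and integrate term by term; the factor $(t_1\cdots t_k)^{m-l}$ contributes $1/[m-l+1]_{p,q}^k$, delivering (\ref{ec1cauchy}).

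For (\ref{ec2cauchy}) the cleanest route is through generating functions. Interchanging sum and integral,
\begin{align*}
\sum_{n=0}^\infty C_{n,p,q}^{(k)}(x)\frac{t^n}{n!} = \int_0^1\!\!\cdots\!\!\int_0^1 (1+t)^{t_1\cdots t_k - x}\,d_{p,q}t_1\cdots d_{p,q}t_k = (1+t)^{-x}\lif_{k,p,q}\bigl(\ln(1+t)\bigr),
\end{align*}
where the last equality uses $\int_0^1 e^{s\,t^{}}d_{p,q}t = \sum_{m}s^m/(m!\,[m+1]_{p,q})$ iterated $k$ times. Now substitute $t\mapsto -t$ into Carlitz's generating function for $S_1(n,m,x)$ to obtain $(1+t)^{-x}(\ln(1+t))^m/m! = \sum_{n\ge m}(-1)^{n-m}S_1(n,m,x)t^n/n!$, plug into $\lif_{k,p,q}(\ln(1+t)) = \sum_{m}(\ln(1+t))^m/(m!\,[m+1]_{p,q}^k)$, swap the order of summation, and read off coefficients.

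The only delicate point is sign tracking in the $t\mapsto -t$ manipulation of the $S_1(n,m,x)$ generating function, since $(-\ln(1+t))^m = (-1)^m(\ln(1+t))^m$ collides with the $(-t)^n$ inside, and one needs $(-1)^{n+m} = (-1)^{n-m}$ to align with the final formula. Once that algebra is handled carefully, both identities drop out by comparing coefficients of $t^n/n!$.
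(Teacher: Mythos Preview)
Your argument for (\ref{ec1cauchy}) is exactly the paper's: expand the falling factorial via $(y)_n=(-1)^n(-y)^{(n)}$ and (\ref{sti1r}), binomial-expand $(t_1\cdots t_k-x)^m$, and integrate using $\int_0^1 t^l\,d_{p,q}t=1/[l+1]_{p,q}$. Nothing to add there.

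For (\ref{ec2cauchy}) you take a genuinely different route. The paper stays purely algebraic: starting from the double sum in (\ref{ec1cauchy}), it swaps the $m$- and $l$-sums, shifts $m\mapsto m+l$, and then invokes Carlitz's finite expansion $S_1(n,m,x)=\sum_{i}\binom{m+i}{i}x^{i}S_1(n,m+i)$ to collapse the inner sum, obtaining (\ref{ec2cauchy}) directly. Your route instead first establishes the exponential generating function $(1+t)^{-x}\lif_{k,p,q}(\ln(1+t))$ --- which in the paper is a \emph{separate} result, Theorem~\ref{gfuncau}, proved \emph{from} (\ref{ec1cauchy}) --- and then reads off (\ref{ec2cauchy}) via the substitution $t\mapsto -t$ in Carlitz's generating function for $S_1(n,m,x)$. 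Your sign bookkeeping is correct: $(-\ln(1+t))^m=(-1)^m(\ln(1+t))^m$ combined with $(-t)^n$ gives $(-1)^{n+m}=(-1)^{n-m}$, exactly as needed. The upshot is that your method proves Theorem~\ref{teocaupq} and Theorem~\ref{gfuncau} in one stroke, at the cost of a formal interchange of the $(p,q)$-integral with an infinite sum; the paper's combinatorial reindexing avoids that interchange but requires knowing Carlitz's explicit finite-sum identity for $S_1(n,m,x)$.
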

\begin{proof}
By \eqref{pqcau}, \eqref{sti1r}  and  $(x)_n=(-1)^n(-x)^{(n)}$, we have
\begin{align*}
C_{n,p,q}^{(k)}(x)&=\sum_{m=0}^{n}(-1)^{n-m}S_1(n,m)\underbrace{\int_{0}^1 \cdots \int_{0}^1}_{k}(t_1\cdots t_k-x)^m d_{p,q}t_1\cdots d_{p,q}t_k\\
&=\sum_{m=0}^{n}(-1)^{n-m}S_1(n,m)\sum_{l=0}^m\binom ml (-x)^{m-l}\underbrace{\int_{0}^1 \cdots \int_{0}^1}_{k} t_1^l\cdots t_k^l d_{p,q}t_1\cdots d_{p,q}t_k\\
&=\sum_{m=0}^{n}(-1)^{n-m}S_1(n,m)\sum_{l=0}^m\binom ml \frac{(-x)^{m-l}}{[l+1]_{p,q}^k}\\
&=\sum_{m=0}^{n}(-1)^{n-m}S_1(n,m)\sum_{l=0}^m\binom ml \frac{(-x)^{l}}{[m-l+1]_{p,q}^k}.
\end{align*}
Comparing the coefficients on both sides, we get \eqref{ec1cauchy}.  Finally, 
from the following relation (\cite[Eq. (5.2)]{Car})
$$S_1(n,m,x)=\sum_{i=0}^n\binom{m+i}{i}x^iS_1(n,m+i),$$
we have
\begin{align*}
C_{n,p,q}^{(k)}(x)&=\sum_{m=0}^n(-1)^{n-m}S_1(n,m)\sum_{l=0}^m\binom ml \frac{(-x)^l}{[m-l+1]_{p,q}^k} \\
&=\sum_{l=0}^n\sum_{m=l}^n(-1)^{n-m}S_1(n,m)\binom ml \frac{(-x)^l}{[m-l+1]_{p,q}^k}\\
&=\sum_{l=0}^n\sum_{m=l}^{n+l}(-1)^{n-m}S_1(n,m)\binom ml \frac{(-x)^l}{[m-l+1]_{p,q}^k}\\
&=\sum_{l=0}^n\sum_{m=0}^{n}(-1)^{n-m+l}S_1(n,m+l)\binom {m+l}{l} \frac{(-x)^l}{[m+1]_{p,q}^k}\\
&=\sum_{m=0}^{n}\frac{(-1)^{n-m}}{[m+1]_{p,q}^k} \sum_{l=0}^m \binom {m+l}{l} S_1(n,m+l) x^l\\
&=\sum_{m=0}^{n}\frac{(-1)^{n-m}}{[m+1]_{p,q}^k} S_1(n,m,x). \qedhere
\end{align*}
\end{proof}

It is not difficult to give a $(p,q)$-analogue of \eqref{genqcauhcy}.

\begin{theorem}\label{gfuncau}
The exponential generating function of the $(p,q)$-poly--Cauchy polynomials  $C_{n,p,q}^{(k)}(x)$ is
\begin{align}
\frac{\lif_{k,p,q}\left(\ln(1+t)\right)}{(1+t)^x}=\sum_{n=0}^\infty C_{n,p,q}^{(k)}(x)\frac{t^n}{n!},\label{ec3cauchy}
\end{align}
where
\begin{align}
\lif_{k,p,q}(t)=\sum_{n=0}^\infty\frac{t^n}{n![n+1]_{p,q}^k}
\end{align}
is the $k$-th $(p, q)$-polylogarithm factorial function.
\end{theorem}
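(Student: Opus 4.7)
The plan is to compute the generating function directly from the integral definition \eqref{pqcau}, using the binomial identity $(1+t)^a=\sum_{n\ge 0}(a)_n t^n/n!$. First I would interchange the order of summation and $(p,q)$-integration to obtain
$$\sum_{n=0}^\infty C_{n,p,q}^{(k)}(x)\frac{t^n}{n!}=\underbrace{\int_0^1\!\!\cdots\!\int_0^1}_{k}\sum_{n=0}^\infty(t_1\cdots t_k-x)_n\frac{t^n}{n!}\,d_{p,q}t_1\cdots d_{p,q}t_k.$$
Applying the binomial identity with $a=t_1\cdots t_k-x$, the inner series collapses to $(1+t)^{t_1\cdots t_k-x}$. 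Since $(1+t)^{-x}$ is independent of the integration variables, it pulls outside, reducing the task to evaluating the $k$-fold integral
$$I:=\underbrace{\int_0^1\!\!\cdots\!\int_0^1}_{k}(1+t)^{t_1\cdots t_k}\,d_{p,q}t_1\cdots d_{p,q}t_k.$$

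Next I would write $(1+t)^{t_1\cdots t_k}=\exp((t_1\cdots t_k)\ln(1+t))$ and expand it as a power series in $t_1\cdots t_k$:
$$(1+t)^{t_1\cdots t_k}=\sum_{m=0}^\infty\frac{(\ln(1+t))^m}{m!}(t_1\cdots t_k)^m.$$
A second interchange of sum and integral factors the $k$-fold integrand into a product of one-dimensional integrals, and invoking the formula $\int_0^1 u^m\,d_{p,q}u=1/[m+1]_{p,q}$ stated earlier in the section yields
$$I=\sum_{m=0}^\infty\frac{(\ln(1+t))^m}{m!\,[m+1]_{p,q}^k}=\lif_{k,p,q}(\ln(1+t)).$$
Multiplying by the factor $(1+t)^{-x}$ set aside above produces \eqref{ec3cauchy}.

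The only delicate point is justifying the two interchanges of summation and $(p,q)$-integration. Because the $(p,q)$-integral on $[0,1]$ is itself an absolutely convergent series sampling the integrand at the points $q^n/p^{n+1}$ or $p^n/q^{n+1}$ (according to whether $|p/q|<1$ or $|p/q|>1$), and the power series in $t$ involved converge absolutely for $|t|$ sufficiently small, both swaps are routine; alternatively one may regard the whole derivation as an identity of formal power series in $t$, which is all that is required for the generating-function statement. As a sanity check, setting $p\to 1$ recovers the known $q$-analogue identity \eqref{genqcauhcy}.
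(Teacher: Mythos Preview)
Your argument is correct, but it follows a genuinely different route from the paper's. The paper first establishes the explicit Stirling-number expansion \eqref{ec1cauchy} in Theorem~\ref{teocaupq}, then forms the generating function $\sum_n C_{n,p,q}^{(k)}(x)\,t^n/n!$, interchanges the $n$- and $m$-sums, and invokes the generating function \eqref{sti1} for Stirling numbers of the first kind to collapse $\sum_{n\ge m}(-1)^{n-m}S_1(n,m)\,t^n/n!$ into $(\ln(1+t))^m/m!$; a further reindexing peels off the $(-x)^l$ sum as $(1+t)^{-x}$. Your proof bypasses Stirling numbers entirely: you go straight from the integral definition \eqref{pqcau}, use the binomial series to recognise the inner sum as $(1+t)^{t_1\cdots t_k-x}$, and then expand the exponential and integrate monomials via $\int_0^1 u^m\,d_{p,q}u=1/[m+1]_{p,q}$. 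Your approach is shorter and self-contained, not relying on the previous theorem; the paper's approach has the merit of making the link between the generating function and the combinatorial identity \eqref{ec1cauchy} explicit, so that Theorems~\ref{teocaupq} and~\ref{gfuncau} are seen as two faces of the same computation.
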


\begin{proof}
From Theorem \ref{teocaupq} we have
\begin{align*}
\sum_{n=0}^\infty C_{n,p,q}^{(k)}(x)\frac{t^n}{n!}&=\sum_{n=0}^\infty \sum_{m=0}^n(-1)^{n-m}S_1(n,m)\sum_{l=0}^m\binom ml \frac{(-x)^l}{[m-l+1]_{p,q}^k} \frac{t^n}{n!}\\
&=\sum_{m=0}^\infty  \sum_{n=m}^\infty (-1)^{n-m}S_1(n,m)\frac{t^n}{n!} \sum_{l=0}^m\binom ml \frac{(-x)^l}{[m-l+1]_{p,q}^k} \\
&=\sum_{m=0}^\infty  \frac{(\ln(1+t))^m}{m!} \sum_{l=0}^m \binom ml \frac{(-x)^l}{[m-l+1]_{p,q}^k} \\
&=\sum_{l=0}^\infty  \frac{(-x)^l}{l!} \sum_{m=l}^\infty \frac{(\ln(1+t))^m}{(m-l)! [m-l+1]_{p,q}^k} \\
&=\sum_{l=0}^\infty  \frac{(-x)^l}{l!} \sum_{n=0}^\infty \frac{(\ln(1+t))^{n+l}}{n! [n+1]_{p,q}^k} \\
&=\frac{1}{(1+t)^x}\sum_{n=0}^\infty  \frac{(\ln(1+t))^n}{n! [n+1]_{p,q}^k} \\
&=\frac{\lif_{k,p,q}\left(\ln(1+t)\right)}{(1+t)^x}. \qedhere
\end{align*}
\end{proof}

Similarly,  we can defined the $(p,q)$-poly-Cauchy polynomials of the second kind by
\begin{align*}
\widehat{C}_{n,p,q}^{(k)}(x)&=\underbrace{\int_{0}^1 \cdots \int_{0}^1}_{k}(-t_1\cdots t_k+x)_nd_{p,q}t_1\cdots d_{p,q}t_k.
\end{align*}
We can find analogous  expressions to \eqref{ec1cauchy}, \eqref{ec2cauchy} and \eqref{ec3cauchy}.

\begin{theorem}\label{sticau2}
If $n\geq 1$, we have
\begin{align}
\widehat{C}_{n,p,q}^{(k)}(x)&=(-1)^{n}\sum_{m=0}^nS_1(n,m)\sum_{l=0}^m\binom ml \frac{(-x)^l}{[m-l+1]_{p,q}^k} \\
&=(-1)^n\sum_{m=0}^nS_1\left(n,m,-x\right)\frac{1}{[m+1]_{p,q}^k}.
\end{align}
Moreover, the exponential generating function of the $(p,q)$-poly--Cauchy polynomials  $\widehat{C}_{n,p,q}^{(k)}(x)$ is
\begin{align*}
(1+t)^x\lif_{k,p,q}\left(-\ln(1+t)\right)=\sum_{n=0}^\infty \widehat{C}_{n,p,q}^{(k)}(x)\frac{t^n}{n!}.
\end{align*}
\end{theorem}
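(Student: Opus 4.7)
The three formulas split naturally into two tasks: the closed forms, which parallel Theorem \ref{teocaupq}, and the generating function, which parallels Theorem \ref{gfuncau}. The plan is to mirror both arguments, keeping careful track of the extra sign that arises from the sign flip $t_1\cdots t_k \mapsto -t_1\cdots t_k$ inside the falling factorial.

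For the first identity, I start with the relation $(y)_n = (-1)^n(-y)^{(n)}$ and apply it to $y = -t_1\cdots t_k + x$, which gives
$(-t_1\cdots t_k+x)_n = (-1)^n (t_1\cdots t_k - x)^{(n)}$.
Expanding the rising factorial via \eqref{sti1r} and then using the binomial theorem on $(t_1\cdots t_k - x)^m$, I obtain
\begin{align*}
\widehat{C}_{n,p,q}^{(k)}(x) &= (-1)^n\sum_{m=0}^n S_1(n,m)\sum_{l=0}^m\binom{m}{l}(-x)^l \underbrace{\int_0^1\!\!\cdots\!\!\int_0^1\!(t_1\cdots t_k)^{m-l} d_{p,q}t_1\cdots d_{p,q}t_k}_{=1/[m-l+1]_{p,q}^k},
\end{align*}
which is the first claimed formula. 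For the second formula I would then re-index exactly as in the proof of Theorem \ref{teocaupq}: swap the order of summation, substitute $m \mapsto m+l$, and recognize the inner sum as $S_1(n,m,-x)$ via Carlitz's identity $S_1(n,m,x)=\sum_i\binom{m+i}{i}x^i S_1(n,m+i)$, with $x$ replaced by $-x$.

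For the exponential generating function, I would substitute the first closed form into $\sum_n \widehat{C}_{n,p,q}^{(k)}(x) t^n/n!$, interchange the $n$- and $m$-sums, and isolate $\sum_{n\geq m}(-1)^n S_1(n,m) t^n/n!$. From \eqref{sti1} this inner sum equals $(-1)^m \cdot (\ln(1+t))^m/m! = (-\ln(1+t))^m/m!$. After pulling a factor $(-\ln(1+t))^l$ out of the $m$-sum (via the change $m=n+l$), the factor $\frac{(-x)^l(-\ln(1+t))^l}{l!} = \frac{(x\ln(1+t))^l}{l!}$ sums to $e^{x\ln(1+t)}=(1+t)^x$, while the residual factor is exactly $\lif_{k,p,q}(-\ln(1+t))$.

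The main subtlety, and the only real place to go wrong, is bookkeeping of signs: the $(-1)^n$ in front of the closed form for $\widehat{C}_{n,p,q}^{(k)}(x)$ combines with $(-1)^{n-m}$ from \eqref{sti1} to produce the crucial $(-\ln(1+t))^m$ (rather than $(\ln(1+t))^m$), which is what converts the polylogarithm factorial into $\lif_{k,p,q}(-\ln(1+t))$ and makes $(1+t)^x$ appear with the correct exponent. Once these signs are tracked correctly, the rest is a direct transcription of the arguments for Theorems \ref{teocaupq} and \ref{gfuncau}.
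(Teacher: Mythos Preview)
Your proposal is correct and matches the paper's intent exactly: the paper omits the proof of Theorem~\ref{sticau2}, merely remarking that the formulas are analogous to \eqref{ec1cauchy}, \eqref{ec2cauchy}, \eqref{ec3cauchy}, so the implied argument is precisely to rerun the proofs of Theorems~\ref{teocaupq} and~\ref{gfuncau} with the sign change $t_1\cdots t_k\mapsto -t_1\cdots t_k$. Your sign bookkeeping (in particular $(-1)^n\cdot(-1)^{n-m}=(-1)^m$ yielding $(-\ln(1+t))^m/m!$, and $(-x)^l(-\ln(1+t))^l=(x\ln(1+t))^l$ producing $(1+t)^x$) is exactly right.
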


\subsection{Some relations between $(p,q)$-poly-Bernoulli polynomials and $(p,q)$-poly-Cauchy polynomials}

The weighted Stirling numbers satisfy the following orthogonality relation \cite{Car}:
$$\sum_{l=m}^n(-1)^{n-l}S_2(n,l,x)S_1(l,m,x)=\sum_{l=m}^n(-1)^{l-m}S_1(n,l,x)S_2(l,m,x)=\delta_{m,n},$$   
where $\delta_{m,n}=1$ if $m=n$ and 0 otherwise.  From above relations we obtain the inverse relation:
$$f_n=\sum_{m=0}^n(-1)^{n-m}S_1(n,m,x)g_m  \iff g_n=\sum_{m=0}^nS_2(n,m,x)f_m.$$

\begin{theorem}
The $(p,q)$-poly-Bernoulli polynomials and $(p,q)$-poly-Cauchy polynomials of both kinds satisfy the following relations
\begin{align}
\sum_{m=0}^nS_1(n,m,x)B_{m,p,q}^{(k)}(x)&=\frac{n!}{[n+1]_{p,q}^k}, \label{invrel1}\\
\sum_{m=0}^nS_2(n,m,x)C_{m,p,q}^{(k)}(x)&=\frac{1}{[n+1]_{p,q}^k}\label{invrel2} ,\\
\sum_{m=0}^nS_2(n,m,-x)\widehat{C}_{m,p,q}^{(k)}(x)&=\frac{(-1)^n}{[n+1]_{p,q}^k} \label{invrel3}.
\end{align}
\end{theorem}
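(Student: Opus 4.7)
The plan is to derive each of the three identities from the closed-form expressions for $B_{n,p,q}^{(k)}(x)$, $C_{n,p,q}^{(k)}(x)$, and $\widehat{C}_{n,p,q}^{(k)}(x)$ established in Theorems \ref{polyberrel1}, \ref{teocaupq}, \ref{sticau2}, combined with the weighted-Stirling orthogonality/inverse relation stated just above the theorem. In each case the target identity is essentially the ``dual'' half of an inversion pair applied to one of those formulas.

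For \eqref{invrel2}, formula \eqref{ec2cauchy} already has the exact shape $C_{n,p,q}^{(k)}(x)=\sum_{m=0}^n(-1)^{n-m}S_1(n,m,x)\,g_m$ with $g_m=1/[m+1]_{p,q}^k$. I would set $f_n=C_{n,p,q}^{(k)}(x)$ and invoke the stated implication $f_n=\sum(-1)^{n-m}S_1(n,m,x)g_m\Rightarrow g_n=\sum S_2(n,m,x)f_m$ to read off \eqref{invrel2} directly.

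For \eqref{invrel3}, Theorem \ref{sticau2} gives $\widehat{C}_{n,p,q}^{(k)}(x)=(-1)^n\sum_{m=0}^n S_1(n,m,-x)/[m+1]_{p,q}^k$. I would split $(-1)^n=(-1)^{n-m}(-1)^m$ to bring this to the form $\widehat{C}_{n,p,q}^{(k)}(x)=\sum_{m=0}^n(-1)^{n-m}S_1(n,m,-x)\cdot\frac{(-1)^m}{[m+1]_{p,q}^k}$. Now applying the inverse pair with the parameter $-x$ in place of $x$ and with $g_m=(-1)^m/[m+1]_{p,q}^k$ yields $\frac{(-1)^n}{[n+1]_{p,q}^k}=\sum_{m=0}^n S_2(n,m,-x)\widehat{C}_{m,p,q}^{(k)}(x)$, which is \eqref{invrel3}.

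For \eqref{invrel1}, Theorem \ref{polyberrel1} expresses $B_{n,p,q}^{(k)}(x)=\sum_{m=0}^n(-1)^{n-m}S_2(n,m,x)\cdot\frac{m!}{[m+1]_{p,q}^k}$; the roles of $S_1$ and $S_2$ are swapped relative to the stated inverse pair, so I cannot just quote the pair. Instead I would apply $\sum_{n=0}^N S_1(N,n,x)\,(\cdot)$ to both sides, swap the order of summation, and use the \emph{second} orthogonality relation from the preamble, $\sum_{n=m}^N(-1)^{n-m}S_1(N,n,x)S_2(n,m,x)=\delta_{m,N}$, to collapse the inner sum to a Kronecker delta; this leaves precisely the single term $m=N$, giving $\sum_{n=0}^N S_1(N,n,x)B_{n,p,q}^{(k)}(x)=N!/[N+1]_{p,q}^k$. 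The main obstacle, and the only non-routine step in the proof, is exactly this: recognizing that \eqref{invrel1} requires the ``mirror'' inverse (provable from the other orthogonality), while also being careful with the sign bookkeeping $(-1)^n=(-1)^{n-m}(-1)^m$ used in \eqref{invrel3}.
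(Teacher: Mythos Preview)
Your proposal is correct and matches the paper's approach for \eqref{invrel2} and \eqref{invrel3}. For \eqref{invrel1}, however, your claim that ``the roles of $S_1$ and $S_2$ are swapped \dots so I cannot just quote the pair'' is mistaken: the stated relation is an \emph{if and only if}, and the paper simply reads it in the other direction, taking $g_n=(-1)^nB_{n,p,q}^{(k)}(x)$ and $f_m=(-1)^m m!/[m+1]_{p,q}^k$ so that Theorem~\ref{polyberrel1} becomes $g_n=\sum_m S_2(n,m,x)f_m$ and the implication yields $f_n=\sum_m(-1)^{n-m}S_1(n,m,x)g_m$, which after cancelling $(-1)^n$ is exactly \eqref{invrel1}. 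Your detour through the second orthogonality relation is of course valid---it is precisely what underlies that direction of the $\iff$---but it is not needed.
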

\begin{proof}
From Theorem \ref{polyberrel1} and the inverse relation for the weighted Stirling numbers with
$$f_m=\frac{(-1)^mm!}{[m+1]_{p,q}^k}, \quad \text{and} \quad g_n=(-1)^nB_{n,p,q}^{(k)}(x),$$
we obtain the identity \eqref{invrel1}.  The remaining relations can be verified in a similar way by using  Theorems \ref{teocaupq} and \ref{sticau2}. 
\end{proof}
Note that if $p\to 1$ we obtain Theorem 6  in \cite{Kom4}.
\begin{theorem}
The $(p,q)$-poly-Bernoulli polynomials and $(p,q)$-poly-Cauchy polynomials of both kinds satisfy the following relations
\begin{align}
B_{n,p,q}^{(k)}(x)&=\sum_{l=0}^n\sum_{m=0}^n(-1)^{n-m}m!S_2(n,m,x)S_2(m,l,y)C_{l,p,q}^{(k)}(y),\\
B_{n,p,q}^{(k)}(x)&=\sum_{l=0}^n\sum_{m=0}^n(-1)^{n}m!S_2(n,m,x)S_2(m,l,-y)\widehat{C}_{l,p,q}^{(k)}(y),\\
C_{n,p,q}^{(k)}(x)&=\sum_{l=0}^n\sum_{m=0}^n\frac{(-1)^{n-m}}{m!}S_1(n,m,x)S_1(m,l,y)B_{l,p,q}^{(k)}(y), \label{repolycabe}\\
\widehat{C}_{n,p,q}^{(k)}(x)&=\sum_{l=0}^n\sum_{m=0}^n\frac{(-1)^{n}}{m!}S_1(n,m,-x)S_1(m,l,y)B_{l,p,q}^{(k)}(y).
\end{align}
\end{theorem}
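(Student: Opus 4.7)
My plan is to exploit the fact that each of the polynomials $B_{n,p,q}^{(k)}$, $C_{n,p,q}^{(k)}$, $\widehat{C}_{n,p,q}^{(k)}$ has already been expanded, via Theorems \ref{polyberrel1}, \ref{teocaupq}, and \ref{sticau2}, as a weighted Stirling transform of the simple quantities $1/[m+1]_{p,q}^k$. Symmetrically, the three orthogonality identities \eqref{invrel1}, \eqref{invrel2}, \eqref{invrel3} of the previous theorem invert these transforms and express $1/[m+1]_{p,q}^k$ back in terms of a \emph{different} one of these polynomial families. Composing one direct expansion with one inverse expansion yields precisely the four stated double sums.

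More concretely, for the first identity I would start from Theorem \ref{polyberrel1},
\[
B_{n,p,q}^{(k)}(x)=\sum_{m=0}^n(-1)^{n-m}m!\,S_2(n,m,x)\,\frac{1}{[m+1]_{p,q}^k},
\]
and substitute the expression $\frac{1}{[m+1]_{p,q}^k}=\sum_{l=0}^m S_2(m,l,y)\,C_{l,p,q}^{(k)}(y)$ supplied by \eqref{invrel2}; swapping the order of summation and extending the inner range from $\{l\le m\}$ to $\{0\le m\le n\}$ (using the vanishing $S_2(m,l,y)=0$ for $m<l$) gives exactly the first formula. The second identity follows by exactly the same substitution but using \eqref{invrel3} instead of \eqref{invrel2}, which produces an extra factor $(-1)^m$ that combines with the $(-1)^{n-m}$ to yield the claimed $(-1)^n$.

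For the Cauchy identities the pattern is mirrored: the expansion \eqref{ec2cauchy} of Theorem \ref{teocaupq} expresses $C_{n,p,q}^{(k)}(x)$ as a weighted-$S_1$ transform of $1/[m+1]_{p,q}^k$, and \eqref{invrel1} inverts such a transform as $\frac{1}{[m+1]_{p,q}^k}=\frac{1}{m!}\sum_{l=0}^m S_1(m,l,y)B_{l,p,q}^{(k)}(y)$; plugging this in and again extending the inner sum (since $S_1(m,l,y)=0$ for $m<l$) delivers the third identity. The fourth identity is identical in structure, starting from the expansion of $\widehat{C}_{n,p,q}^{(k)}(x)$ in Theorem \ref{sticau2}, whose leading factor $(-1)^n$ is what converts the sign from $(-1)^{n-m}$ to $(-1)^n$.

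Essentially no genuine obstacle arises: the entire argument is a two-step substitution of known formulas plus an interchange of summation. The only real care needed is sign bookkeeping (tracking which factor $(-1)^m$ or $(-1)^n$ absorbs into which), and justifying the extension of the inner sum's upper limit back out to $n$, which is immediate from the convention that the weighted Stirling numbers $S_1(m,l,y)$ and $S_2(m,l,y)$ vanish when $m<l$. All four formulas are therefore established by one uniform computation, differing only in which pair among Theorems \ref{polyberrel1}, \ref{teocaupq}, \ref{sticau2} and identities \eqref{invrel1}--\eqref{invrel3} is coupled.
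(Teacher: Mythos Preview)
Your proposal is correct and is essentially the same argument as the paper's: both combine one explicit Stirling expansion (Theorem~\ref{polyberrel1}, \ref{teocaupq}, or \ref{sticau2}) with the corresponding inverse identity from \eqref{invrel1}--\eqref{invrel3}, using the vanishing of $S_i(m,l,\cdot)$ for $l>m$ to adjust the inner summation range. The paper simply runs the computation in the opposite direction (starting from the double sum and collapsing it), but the ingredients and logic are identical.
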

\begin{proof}
We only show the proof of \eqref{repolycabe}. The proofs of the remaining identities are similar.  
From Equations  \eqref{ec2cauchy} and \eqref{invrel1} we have
 \begin{align*}
 \sum_{l=0}^n\sum_{m=0}^n\frac{(-1)^{n-m}}{m!}&S_1(n,m,x)S_1(m,l,y)B_{l,p,q}^{(k)}(y)\\
 &=\sum_{m=0}^n \frac{(-1)^{n-m}}{m!}S_1(n,m,x) \sum_{l=0}^mS1(m,l,y)B_{l,p,q}^{(k)}(y)\\
 &=\sum_{m=0}^n \frac{(-1)^{n-m}}{m!}S_1(n,m,x)\frac{m!}{[m+1]_{p,q}^k}\\
&=C_{n,p,q}^{(k)}(x).\qedhere\
  \end{align*}
\end{proof}

Finally, we show some relations between $(p,q)$-poly-Cauchy  polynomials of both kinds. 
\begin{theorem}
If $n\geq 1$ we have
\begin{align}
(-1)^n\frac{C_{n,p,q}^{(k)}(x)}{n!}&=\sum_{m=1}^n\binom{n-1}{m-1}\frac{\widehat{C}_{m,p,q}^{(k)}(x)}{m!},\\
(-1)^n\frac{\widehat{C}_{n,p,q}^{(k)}(x)}{n!}&=\sum_{m=1}^n\binom{n-1}{m-1}\frac{C_{m,p,q}^{(k)}(x)}{m!}. \label{rel1caufin}
\end{align}
\end{theorem}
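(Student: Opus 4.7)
The plan is a short generating-function computation built on Theorems \ref{gfuncau} and \ref{sticau2}. The key tool is the identity
\begin{align*}
\sum_{n=m}^\infty \binom{n-1}{m-1}\, t^n = \left(\frac{t}{1-t}\right)^m,
\end{align*}
together with the substitution $u = t/(1-t)$, which satisfies $1+u = 1/(1-t)$ and hence $\ln(1+u) = -\ln(1-t)$. This substitution is precisely what bridges the generating functions of the two kinds of $(p,q)$-poly-Cauchy polynomials. A second minor observation I will need is that $C_{0,p,q}^{(k)}(x) = \widehat{C}_{0,p,q}^{(k)}(x) = 1$, which comes directly from the convention $(y)_0 = 1$ in the integral definitions.

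For the first identity, I would interchange the order of summation on the right-hand side to obtain
\begin{align*}
\sum_{n=1}^\infty \left(\sum_{m=1}^n \binom{n-1}{m-1}\frac{\widehat{C}_{m,p,q}^{(k)}(x)}{m!}\right) t^n = \sum_{m=1}^\infty \frac{\widehat{C}_{m,p,q}^{(k)}(x)}{m!}\left(\frac{t}{1-t}\right)^m.
\end{align*}
Writing $G(u) := \sum_{m=0}^\infty \widehat{C}_{m,p,q}^{(k)}(x) u^m/m! = (1+u)^x\lif_{k,p,q}(-\ln(1+u))$ from Theorem \ref{sticau2}, the right-hand side equals $G(t/(1-t)) - 1$. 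Applying the substitution $u = t/(1-t)$ turns this into
\begin{align*}
\frac{\lif_{k,p,q}(\ln(1-t))}{(1-t)^x} - 1.
\end{align*}
On the other hand, replacing $t$ by $-t$ in Theorem \ref{gfuncau} and subtracting the $n=0$ term yields the same expression for $\sum_{n=1}^\infty (-1)^n C_{n,p,q}^{(k)}(x)\, t^n/n!$. Comparing coefficients of $t^n$ for $n\ge 1$ gives the first identity.

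The second identity follows by the same mechanism with the roles of $C$ and $\widehat{C}$ interchanged: reindexing the right-hand side produces $F(t/(1-t)) - 1$, where $F(u) = \lif_{k,p,q}(\ln(1+u))/(1+u)^x$ is the generating function of Theorem \ref{gfuncau}. The substitution $u = t/(1-t)$ converts this into $(1-t)^x \lif_{k,p,q}(-\ln(1-t)) - 1$, which matches the generating function obtained from Theorem \ref{sticau2} by the substitution $t \mapsto -t$ (again after removing the constant term $1$). The main obstacle is purely bookkeeping: carefully tracking the $m=0$ contributions and the signs produced by the logarithmic substitution. Once one recognises that $u = t/(1-t)$ is the natural change of variable that exchanges $\ln(1+u) \leftrightarrow -\ln(1-t)$ and $(1+u)^x \leftrightarrow (1-t)^{-x}$, both identities follow in a handful of lines without any further combinatorial input.
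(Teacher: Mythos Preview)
Your argument is correct, but it follows a genuinely different route from the paper. The paper works directly with the integral definition \eqref{pqcau}: it rewrites $(-1)^n\binom{t_1\cdots t_k-x}{n}$ via upper negation as $\binom{x-t_1\cdots t_k+n-1}{n}$, applies the Vandermonde convolution $\binom{r+s}{n}=\sum_{l}\binom{r}{l}\binom{s}{n-l}$ with $r=x-t_1\cdots t_k$ and $s=n-1$, and then integrates term by term to recover $\widehat{C}_{l,p,q}^{(k)}(x)/l!$. Your approach instead leverages the exponential generating functions already established in Theorems \ref{gfuncau} and \ref{sticau2}, together with the substitution $u=t/(1-t)$ that swaps $\ln(1+u)\leftrightarrow -\ln(1-t)$ and $(1+u)^x\leftrightarrow(1-t)^{-x}$. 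The paper's method is self-contained and combinatorial, needing nothing beyond the defining integrals and Vandermonde; your method is shorter once the two EGFs are in hand and makes the symmetry between the two kinds of $(p,q)$-poly-Cauchy polynomials transparent at the level of generating functions. Both handle the second identity by the obvious role reversal.
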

\begin{proof}

From definition of the $(p,q)$-poly-Cauchy polynomials of the first kind we get
\begin{align*}
(-1)^n\frac{C_{n,p,q}^{(k)}(x)}{n!}&=(-1)^n\underbrace{\int_{0}^1 \cdots \int_{0}^1}_{k}\frac{(t_1\cdots t_k-x)_n}{n!}d_{p,q}t_1\cdots d_{p,q}t_k\\
&=(-1)^n\underbrace{\int_{0}^1 \cdots \int_{0}^1}_{k}\binom{t_1\cdots t_k-x}{n}d_{p,q}t_1\cdots d_{p,q}t_k\\
&=\underbrace{\int_{0}^1 \cdots \int_{0}^1}_{k}\binom{x-t_1\cdots t_k+n-1}{n}d_{p,q}t_1\cdots d_{p,q}t_k
\end{align*}
By using the Vandermonde convolution
$$\sum_{k=0}^n\binom{r}{k}\binom{s}{n-k}=\binom{r+s}{n},$$  
with $r=x-t_1\cdots t_k$ and $s=n-1$ we obtain
\begin{align*}
(-1)^n\frac{C_{n,p,q}^{(k)}(x)}{n!}&=\underbrace{\int_{0}^1 \cdots \int_{0}^1}_{k} \sum_{l=0}^n
\binom{x-t_1\cdots t_k}{l}\binom{n-1}{n-l}d_{p,q}t_1\cdots d_{p,q}t_k\\
&=\sum_{l=0}^n \binom{n-1}{n-l} \underbrace{\int_{0}^1 \cdots \int_{0}^1}_{k} 
\binom{x-t_1\cdots t_k}{l}d_{p,q}t_1\cdots d_{p,q}t_k\\
&=\sum_{l=0}^n \binom{n-1}{n-l} \frac{1}{l!}\underbrace{\int_{0}^1 \cdots \int_{0}^1}_{k} 
(-t_1\cdots t_k+x)_ld_{p,q}t_1\cdots d_{p,q}t_k\\
&=\sum_{l=0}^n \binom{n-1}{n-l} \frac{\widehat{C}_{l,p,q}^{(k)}(x)}{l!}.
\end{align*}
The proof of \eqref{rel1caufin}  is similar. 
\end{proof}


\end{document}